\theoremstyle{definition}
\newtheorem{exmp}{Example}[section]
\newtheorem{proposition}{Proposition}[section]
\newtheorem{theorem}[proposition]{Theorem}
\newtheorem{definition}[proposition]{Definition}
\newtheorem{lemma}[proposition]{Lemma}
\newtheorem{corollary}[proposition]{Corollary}
\begin{document}
	
	\title{On open book embedding of contact manifolds in the standard contact sphere}
	
	\subjclass{Primary: 53D10. Secondary: 53D15, 57R17.}
	
	\keywords{contact open book, embedding.}

	\author{Kuldeep Saha}
	\address{Chennai Mathematical Institute, Chennai, India}
	\email{kuldeep@cmi.ac.in}
	
	\begin{abstract}
		We prove some open book embedding results in the contact category with a constructive ap-
		proach. As a consequence, we give an alternative proof of a Theorem of Etnyre and Lekili that produces a large
		class of contact $3$-manifolds admitting contact open book embeddings in the standard contact $5$-sphere.
		We also show that all the Ustilovsky $(4m + 1)$-spheres contact open book embed in the standard contact $(4m + 3)$-sphere.
	\end{abstract}
	
	\maketitle

\section{Introduction}

An open book decomposition of a manifold $M^m$ is a pair $(V^{m-1},\phi)$, such that $M^m$ is diffeomorphic to $\mathcal{MT}(V^{m-1}, \phi) \cup_{id} \partial V^{m-1} \times D^2 $. Here, $V^{m-1}$, the \emph{page}, is a manifold with boundary, and $\phi$, the \emph{monodromy}, is a diffeomorphism of $V^{m-1}$ that restricts to identity in a neighborhood of the boundary $\partial V$. $\mathcal{MT}(V^{m-1}, \phi)$ denotes the mapping torus of $\phi$. We denote an \emph{open book}, with page $V^{m-1}$ and monodromy $\phi$, by $\mathcal{A}ob(V,\phi)$. The existence of open book decompositions, for a fairly large class of manifolds, is now known due the works of Alexander \cite{Al}, Winkelnkemper \cite{Wi}, Lawson \cite{La}, Quinn \cite{Qu} and Tamura \cite{Ta}. In particular, every closed, orientable, odd dimensional manifold admits an open book decomposition. 

Thurston and Winkelnkemper \cite{TW} have shown that starting from an exact symplectic manifold $(\Sigma^{2m},\omega)$ as page and a boundary preserving symplectomorphism $\phi_s$ of $(\Sigma^{2m},\omega)$ as monodromy, one can produce a contact $1$-form $\alpha$ on $N^{2m+1} = \mathcal{A}ob(\Sigma^{2m},\phi_s)$. A remarkable result of Giroux \cite{Gi} says that the converse is also true. Roughly speaking, this means that every contact manifold $(M^{2m+1},\xi)$, can be obtained by the construction of Thurston and Winkelnkemper, starting with some exact symplectic manifold $(V^{2m},\omega_V)$ as page and some boundary preserving symplectomorphism $\phi_V$ of $(V^{2m},\omega_V)$ as monodromy. We say, the contact manifold $(M^{2m+1},\xi)$ is \emph{supported} by the open book $\mathcal{A}ob(V^{2m},\omega_V,\phi_V)$. This gives a version of existence of open book in the setting of contact manifolds, and allows us to translate questions about contact manifolds into questions about \emph{contact open book decompositions} of that manifold. 

A closed manifold $M^{2n+1}$ \emph{open book embeds} into another closed manifold $V^{2N+1}$, if there is an open book decomposition of $V^{2N+1}$ and an embedding $\iota : M \rightarrow V$, such that the open book on $V^{2N+1}$ induces an open book decomposition on $\iota(M^{2n+1})$. One can then ask: When does a contact manifold $(M^{2n+1},\xi)$ embeds into another contact manifold $(V^{2N+1},\eta)$, such that there is a supporting open book decomposition of $(V^{2N+1},\eta)$, that induces a supporting open book on the embedded manifold? For a precise definition of contact open book embedding, see definition \ref{contact ob embed def}. 

The first results in contact open book embedding were obtained by Mori \cite{Mo} and Torres \cite{To}, using
techniques from approximate holomorphic geometry. In recent times much progress has been made on the
question of co-dimension $2$ contact embedding due to the works of Kasuya \cite{Ka2}, Etnyre and Furukawa \cite{EF},
Etnyre and Lekili \cite{EL} and Pancholi and Pandit \cite{PP}. Recently, the existence and uniqueness questions for co-dimension $2$ iso-contact embedding has been completely answered by the works of Casals, Pancholi and Presas \cite{CPP}, Casals and Etnyre \cite{CE} and Honda and Huang \cite{HH}. On the other hand, explicit constructions of smooth open book embeddings of closed, oriented $3$-manifolds into $S^2 \times S^3$ and $S^2 \tilde{\times} S^3$ has been found in \cite{PPS}. In this article, we prove some results regarding contact open book embedding in the standard contact sphere, with similar constructive approach. As an application, we give an alternate proof of a result due to Etnyre and Lekili \cite{EL} that provides contact open book embedding of a large class of contact $3$-manifolds in $(S^5,\xi_{std})$. Another application shows that all Ustilovsky spheres have co-dimension $2$ contact open book embeddings in the standard contact sphere (see section \ref{exotic}). Explicit co-dimension $2$ contact open book embeddings were previously constructed by Casals and Murphy \cite{CM} in order to construct embedding of an overtwisted contact sphere $(S^{2n-1}, \xi_{ot})$ into any contact manifold of dimension $2n + 1$.

 Throughout the article we assume all contact manifolds to be co-oriented and closed. Unless stated otherwise, we will denote a contact manifold $M$ with a contact hyperplane distribution $\xi$ on it by $(M,\xi)$. The standard contact structure on
 the unit sphere $S^{2m+1} \subset \mathbb{R}^{2m+2}$ will be denoted by $\xi_{std}$. For related definitions, we refer to section \ref{prelim}. For related notions in basic contact topology, we refer to \cite{Et} and \cite{Ge}.

\

We start with the following theorem.

\begin{theorem} \label{1st open book theorem}
	For $n\geq1$ and $k,l \in \mathbb{Z}$, $\mathcal{A}ob(DT^*S^n,d\lambda^n_{can},\tau_k)$ contact open book embeds in $\mathcal{A}ob(DT^*S^{n+1},d\lambda^{n+1}_{can},\tau_l)$.

Here, $d\lambda^n_{can}$ denotes the canonical symplectic form on the unit disk cotangent bundle $DT^*S^n$ of $S^n$, and $\tau_m$ denotes the $m$-fold Dehn-Seidel twist (section \ref{dehn twist}) for $m \in \mathbb{Z}$.

\end{theorem}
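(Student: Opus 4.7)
My strategy is to construct the embedding in three stages: an embedding of pages, a compatible embedding of bindings, and matching of monodromies.

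\emph{Pages and bindings.} The starting point is the totally geodesic equatorial inclusion $S^n \hookrightarrow S^{n+1}$. Since the normal bundle is trivial, pick a tubular neighborhood $U \iso S^n \times (-\varepsilon, \varepsilon)$; as the cotangent bundle of a product splits symplectically, $T^*U \iso T^*S^n \times T^*(-\varepsilon, \varepsilon)$, which yields a codimension-two Liouville embedding $\iota \colon DT^*S^n \hookrightarrow DT^*S^{n+1}$ with $\iota^*\lambda^{n+1}_{can} = \lambda^n_{can}$, up to rescaling. Restricting to boundaries gives a codimension-two contact embedding $ST^*S^n \hookrightarrow ST^*S^{n+1}$, compatible with the Reeb dynamics since the geodesic flow on $S^{n+1}$ restricts to the geodesic flow of the equator.

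\emph{Monodromies.} This is the heart of the argument. Choose a representative of $\tau_l$ on $DT^*S^{n+1}$, built from the standard normalized-geodesic-flow model with cutoff data chosen so that the representative preserves $\iota(DT^*S^n)$ setwise; its restriction is then tangent to the source page. To realize $\tau_k$ for arbitrary $k$ and $l$, I would exploit two sources of flexibility. First, the symplectic normal bundle along $\iota(DT^*S^n)$ is canonically the $T^*(-\varepsilon, \varepsilon)$ factor, which carries its own Dehn-twist-type symmetries in the normal direction that do not affect the tangential restriction. Second, any residual discrepancy is absorbed by post-composing with a boundary-preserving Hamiltonian symplectomorphism of $DT^*S^{n+1}$ supported in a neighborhood of $\iota(DT^*S^n)$ whose restriction to the source page is $\tau_{k-l}$; such an extension exists because, in codimension two, the Dehn twist on $DT^*S^n$ extends to a symplectic isotopy of the ambient $DT^*S^{n+1}$ thanks to the extra transverse room.

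\emph{Assembly and main obstacle.} The three steps combine to give an embedding of mapping tori $\mathcal{MT}(DT^*S^n, \tau_k) \hookrightarrow \mathcal{MT}(DT^*S^{n+1}, \tau_l)$ which glues across the binding neighborhoods to a contact open book embedding. The hardest step is the monodromy compatibility: one must verify that both the normal-direction absorption and the boundary-preserving Hamiltonian modification are carried out through symplectomorphisms isotopic to the identity in the appropriate sense, so that the open book induced on the image of the source genuinely supports the contact structure of $\mathcal{A}ob(DT^*S^n, d\lambda^n_{can}, \tau_k)$.
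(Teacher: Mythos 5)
Your overall skeleton matches the paper's: embed the page via the equatorial inclusion $S^n\subset S^{n+1}$, observe that an ambient Dehn--Seidel twist restricts to a Dehn--Seidel twist of the equatorial $DT^*S^n$, and then try to use the codimension-two room to decouple the exponent $k$ on the subpage from the exponent $l$ on the ambient page. The gap is that this last, decisive step is asserted rather than constructed, and neither of the two mechanisms you offer does the job. The ``Dehn-twist-type symmetries in the normal direction'' of the factor $T^*(-\varepsilon,\varepsilon)$ do not exist in any useful sense (a cotangent strip contains no Lagrangian sphere to twist along), and a product symplectomorphism $\mathrm{id}\times\psi$ of the Weinstein neighborhood restricts to the identity on the subpage, so by your own remark it cannot change the induced monodromy. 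Your second mechanism --- a boundary-preserving Hamiltonian symplectomorphism of $DT^*S^{n+1}$, supported near $\iota(DT^*S^n)$, isotopic to the identity, and restricting to $\tau_{k-l}$ on the subpage --- is precisely the nontrivial existence statement that needs proof; ``extra transverse room'' is the slogan, not the argument.

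The paper fills exactly this gap with a concrete construction. One first displaces the embedded page off the zero section of $DT^*S^{n+1}$ by the symplectic isotopy of embeddings $j_t(\vec{x},\vec{y})=(\vec{x},0,\vec{y},t\cdot g(|\vec{y}|))$ (Lemma \ref{open book lemma 1}), where $g$ is a bump function in the new fibre coordinate; a distance estimate shows that $j_1(DT^*S^n)$ stays at distance greater than $\frac{1}{2}\min\{\epsilon,\delta\}$ from the zero section $S^{n+1}_0$. One then applies $\tau_{l-k}$ with support smaller than this distance, so that it fixes the displaced page pointwise, and finally moves the page back. The Auroux-type isotopy extension (Lemma \ref{open book lemma 2}) upgrades $j_t$ to an ambient symplectic isotopy $J_t$ with $J_0=\mathrm{id}$, so the total monodromy $J_1^{-1}\circ\tau_{l-k}\circ J_1\circ\tau_k$ is symplectically isotopic to $\tau_l$ while restricting to $\tau_k$ on the subpage. (Note the paper arranges the restriction of the correction term to be the identity and its ambient class to be $\tau_{l-k}$, rather than your dual formulation; either works, but only once the displacement-plus-small-support trick is in hand.) If you supply this argument, your outline becomes a proof; without it, the monodromy-matching step, which you correctly single out as the hardest, remains open.
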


Theorem \ref{1st open book theorem} can also be obtained from the construction of contact open book embedding
of $\mathcal{A}ob(DT^*S^n, d\lambda_{can}^n, \tau_k)$ in $\mathcal{A}ob(D^{2n+2},d\lambda_0, id)$, due to Casals and Murphy \cite{CM}. For that we have to
stabilize the target page $(D^{2n+2},d\lambda_0)$, away from the image of $DT^*S^n$ under the embedding. The proof given in this article has a different approach, and will come in handy while proving Theorem \ref{2nd open book theorem} below.

In \cite{CMP}, Casals, Murphy and Presas gave a characterization of overtwisted contact structures in terms of open books. They showed that every overtwisted contact structure is a negative stabilization of some open book decomposition. In particular, $\mathcal{A}ob(DT^*S^n,d\lambda_{can},\tau_{-1})$ gives an overtwisted contact structure on $S^{2n+1}$. Thus, an immediate corollary of Theorem \ref{1st open book theorem} is the following fact, which was first shown by
Casals and Murphy \cite{CM}.

\begin{corollary}\label{1st open book corollary}
	For all $n\geq1$, there exists an overtwisted contact structure on $S^{2n+1}$ that contact open book embeds in $(S^{2n+3},\xi_{std})$.
\end{corollary}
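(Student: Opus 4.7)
The strategy is to specialize Theorem \ref{1st open book theorem} to the pair of exponents $k=-1$ and $l=1$, and then recognize the two resulting contact open books as, respectively, an overtwisted structure on $S^{2n+1}$ and the standard contact structure on $S^{2n+3}$.

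First I would recall the standard Milnor-type open book decomposition of $(S^{2n+3},\xi_{std})$: viewing $S^{2n+3}$ as the boundary of the Stein domain $\{z_0^2+\cdots+z_{n+1}^2=\epsilon\}$ intersected with a ball (the $A_1$-Milnor fibration), one obtains a Giroux-supporting open book for $(S^{2n+3},\xi_{std})$ whose page is the Weinstein manifold $(DT^*S^{n+1},d\lambda^{n+1}_{can})$ and whose monodromy is the positive Dehn-Seidel twist $\tau_1$. Thus
\[
\mathcal{A}ob(DT^*S^{n+1},d\lambda^{n+1}_{can},\tau_1)=(S^{2n+3},\xi_{std}).
\]

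Next I would identify the source. Starting from the trivial open book $(D^{2n},d\lambda_0,\mathrm{id})$ supporting $(S^{2n+1},\xi_{std})$, the negative stabilization (in the sense of Giroux) has page $D^{2n}\natural DT^*S^n=DT^*S^n$ and monodromy given by composing the identity with a single \emph{negative} Dehn-Seidel twist along the zero section. Hence the negative stabilization is precisely $\mathcal{A}ob(DT^*S^n,d\lambda^n_{can},\tau_{-1})$. By the main theorem of Casals-Murphy-Presas \cite{CMP}, any negative stabilization carries an overtwisted contact structure, so this open book supports an overtwisted contact structure on $S^{2n+1}$.

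Finally I would invoke Theorem \ref{1st open book theorem} with $k=-1$ and $l=1$ to obtain a contact open book embedding
\[
\mathcal{A}ob(DT^*S^n,d\lambda^n_{can},\tau_{-1})\;\hookrightarrow\;\mathcal{A}ob(DT^*S^{n+1},d\lambda^{n+1}_{can},\tau_1),
\]
which under the identifications above is a contact open book embedding of an overtwisted $S^{2n+1}$ into $(S^{2n+3},\xi_{std})$. There is no substantive obstacle once Theorem \ref{1st open book theorem} is in hand; the only point to verify carefully is the identification of $(DT^*S^{n+1},\tau_1)$ with the standard supporting open book of $(S^{2n+3},\xi_{std})$ and the recognition of $(DT^*S^n,\tau_{-1})$ as a negative stabilization, both of which are standard facts.
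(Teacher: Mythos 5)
Your proof is correct and is essentially identical to the paper's: both specialize Theorem \ref{1st open book theorem} to $k=-1$, $l=1$, identify $\mathcal{A}ob(DT^*S^{n+1},d\lambda^{n+1}_{can},\tau_1)$ with $(S^{2n+3},\xi_{std})$, and cite Casals--Murphy--Presas to see that $\mathcal{A}ob(DT^*S^n,d\lambda^n_{can},\tau_{-1})$, being a negative stabilization, supports an overtwisted contact structure on $S^{2n+1}$. The extra detail you supply on the Milnor fibration and the stabilization picture is just an expansion of the standard facts the paper takes for granted.
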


Using Theorem \ref{1st open book theorem}, we can find a large class of contact manifolds that admit co-dimension $2$ contact open book embeddings in the standard contact sphere. For definition of plumbing and boundary connected sum, see section \ref{stabilization} and section \ref{bandsum}.  The boundary connected sum and plumbing will be denoted by $\#_b$ and $\mathsection$ respectively. 

\begin{definition}
	Consider the canonical symplectic structure $d\lambda_M$ on the cotangent bundle of a manifold $M$. We call a contact open book $\mathcal{A}ob(V^{2n},\omega,\phi)$ of \textit{type-$1$}, if it satisfies the following properties.
	
	\begin{enumerate}
		\item $(V^{2n},\omega)$ is symplectomorphic to $$(DT^*M_1 \mathsection DT^*M_2 \mathsection...\mathsection DT^*M_p\#_b DT^*N_1\#_b DT^*N_2....\#_b DT^*N_q, d\lambda_{M_1} \mathsection d\lambda_{M_2} \mathsection...\mathsection d\lambda_{M_p} \#_b d\lambda_{N_1} \#_b d\lambda_{N_2}\#_b...\#_b d\lambda_{N_q})$$.
		Here, $M_i$s and $N_j$s will always be either the equator sphere $S^n \subset S^{n+1}$, or a closed $n$-dimensional submanifold of $S^{n+1}$.
		
		\item The monodromy $\phi$ is generated by Dehn-Seidel twists along the $S^n$s among $M_i$s and $N_j$s . 
	\end{enumerate} 
	
\end{definition}

\begin{theorem} \label{2nd open book theorem}
	If $(M^{2n+1},\xi)$ is a contact manifold supported by an open book of type-$1$, then $(M^{2n+1},\xi)$ has a contact open book embedding in $(S^{2n+3},\xi_{std})$.
\end{theorem}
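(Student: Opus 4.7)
My plan is to embed the type-$1$ open book into the Milnor-type open book $\mathcal{A}ob(DT^*S^{n+1},d\lambda^{n+1}_{can},\tau_l)$, which supports $(S^{2n+3},\xi_{std})$ for a suitable $l$, and to use Theorem \ref{1st open book theorem} as the building block for each Dehn--Seidel factor in the page monodromy. Two ingredients are needed: a symplectic embedding of the page $(V,\omega)$ into $(DT^*S^{n+1},d\lambda^{n+1}_{can})$ taking Lagrangian spheres to equatorial $n$-spheres in $S^{n+1}$, and a realization of the page monodromy $\phi$ as the restriction of a single global $\tau_l$ on the target page.

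For the page, I would realize each $M_i$ and $N_j$ inside the zero section $S^{n+1}\subset DT^*S^{n+1}$. A small ambient isotopy within $S^{n+1}$ puts the submanifolds corresponding to plumbing nodes of $V$ in transverse position at a single point, and keeps the boundary-summed pieces disjoint. Applying the Weinstein tubular neighborhood theorem to each submanifold then produces cotangent disk bundle neighborhoods that, via the standard local models for plumbing and boundary connected sum of cotangent bundles, assemble into a symplectic embedding of $(V,\omega)$ into $(DT^*S^{n+1},d\lambda^{n+1}_{can})$. In particular, every Lagrangian $S^n$ in $V$ on which $\phi$ twists sits in $DT^*S^{n+1}$ as the zero section of a Weinstein neighborhood of an $n$-sphere in $S^{n+1}$, which is exactly the configuration analyzed in Theorem \ref{1st open book theorem}.

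For the monodromy, write $\phi=\tau_{k_1}\circ\cdots\circ\tau_{k_r}$ with each $\tau_{k_i}$ a Dehn--Seidel twist along a Lagrangian sphere $S^n_i\subset V$. To each factor I would apply the construction behind Theorem \ref{1st open book theorem} in a chosen neighborhood of the embedded $DT^*S^n_i\subset DT^*S^{n+1}$: this converts the $k_i$-fold twist along $S^n_i$ into a piece of a global Dehn--Seidel twist $\tau_l$ along the zero section $S^{n+1}$. Because Theorem \ref{1st open book theorem} allows any target power $l$, there is enough freedom to match all factors simultaneously to one common global $\tau_l$, while the non-twisted pieces of $V$ extend by identity on the complement. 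The Thurston--Winkelnkemper construction then ensures that the target contact form restricts to the contact form of the page open book on the embedded $V$, yielding a contact open book embedding of $(M^{2n+1},\xi)$ in $(S^{2n+3},\xi_{std})$. The main obstacle I anticipate is keeping the supports of the various applications of Theorem \ref{1st open book theorem} coherently disjoint when several $S^n_i$ share plumbing nodes with other pieces of $V$; handling this will require shrinking the Weinstein neighborhoods carefully and a final compactly supported symplectic isotopy that absorbs any identity contributions into the single global $\tau_l$.
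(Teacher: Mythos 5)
Your overall strategy---embed the type-$1$ page into the page of a supporting open book of $(S^{2n+3},\xi_{std})$ and realize each Dehn--Seidel factor via the push-off trick of Theorem \ref{1st open book theorem}---is the right one, but there is a genuine gap at the page-embedding step, and it is precisely where the paper's argument diverges from yours. You propose to place all the pieces $M_i$, $N_j$ inside the single zero section $S^{n+1}\subset DT^*S^{n+1}$ and to arrange the plumbing intersections by ``a small ambient isotopy within $S^{n+1}$ putting the submanifolds in transverse position at a single point.'' This is impossible: two closed $n$-dimensional submanifolds of the $(n+1)$-dimensional $S^{n+1}$ that meet transversally intersect along an $(n-1)$-dimensional submanifold, never a single point (and even for $n=1$ the mod $2$ intersection number of two closed curves in $S^2$ forces an even number of transverse intersection points). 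So the plumbing configuration of a type-$1$ page cannot be realized with all the zero sections lying inside the zero section of one copy of $DT^*S^{n+1}$, and without that the local model feeding into Theorem \ref{1st open book theorem} is lost. A related weakness affects your monodromy step: a single global $\tau_l$ along the one zero section $S^{n+1}$ induces the \emph{same} power of twist on every equatorial $S^n_i$ contained in it, so ``freedom in the choice of $l$'' alone cannot produce different powers $k_i$ on different factors; one must separate the factors along the $S^1$-direction of the mapping torus and push the remaining pieces off the zero section before each small-support correcting twist, which your phrase ``match all factors simultaneously to one common global $\tau_l$'' glosses over.

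The paper sidesteps both problems by changing the target: it takes one copy of $DT^*S^{n+1}$ \emph{per summand} of the type-$1$ page and forms $DT^*S^{n+1}\mathsection\cdots\mathsection DT^*S^{n+1}\#_b DT^*S^{n+1}\#_b\cdots\#_b DT^*S^{n+1}$, mirroring the plumbing and boundary-connected-sum pattern of $V$. Each embedding $DT^*M_i\hookrightarrow DT^*S^{n+1}_i$ is then the straightforward one preceding Lemma \ref{open book lemma 1}, the plumbing points are matched piece by piece in the local model, and the monodromy is obtained by subdividing the $S^1$-interval of the mapping torus: in each subinterval one applies a full-support twist along one $S^{n+1}_i$, pushes the corresponding $DT^*S^n_i$ off the zero section, and applies a correcting twist of small support, so that the total target monodromy is a product of single positive twists, one along each $S^{n+1}_i$. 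The total space of this target open book is still $(S^{2n+3},\xi_{std})$ because plumbing and boundary connected sum of pages correspond to contact connected sum of the resulting contact manifolds. To salvage your single-copy approach you would need, at a minimum, a proper symplectic embedding of the entire plumbed page into one $DT^*S^{n+1}$ compatible with the twist construction; this is not supplied by your argument and is unnecessary once one allows the plumbed target.
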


The next corollary gives an application of Theorem \ref{2nd open book theorem} to contact open book embedding of $3$-manifolds in $(S^5,\xi_{std})$. It was first proved by Etnyre and Lekili (Theorem $4.3$, \cite{EL}).

\begin{figure} \label{humphreygen}
	\centering
	\includegraphics{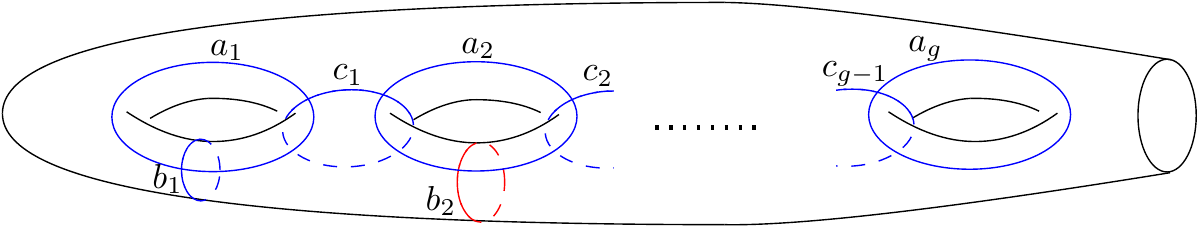}
	\caption{The Humphreys generators of mapping class groups of $\Sigma_g$ consists of the $(2g+1)$ curves given by $a_1,c_1,a_2,c_2,...a_{g-1},c_{g-1},a_g,b_1,b_2$.}

\end{figure}

\begin{corollary}\label{2nd open book corollary}
	Let $(M^3,\xi)$ be a contact $3$-manifold supported by an open book with page $\Sigma_g$ as in Figure \ref{humphreygen}. Say, the monodromy of the supporting open book is generated by Dehn twists along the blue curves: $b_1,a_1,c_1,a_2,c_2,...,a_{g-1},c_{g-1},a_g$. Then $(M^3,\xi)$ contact open book embeds in $(S^5,\xi_{std})$. 
\end{corollary}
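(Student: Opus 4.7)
The plan is to display the supporting open book as one of type-$1$ and then invoke Theorem \ref{2nd open book theorem} with $n=1$.

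First I would observe that the $2g$ blue curves $b_1,a_1,c_1,a_2,c_2,\dots,a_{g-1},c_{g-1},a_g$ form a transverse chain: each consecutive pair meets once, and there are no other intersections. A regular neighborhood of their union is therefore the $A_{2g}$ chain plumbing of $2g$ copies of $DT^*S^1$. An Euler-characteristic and boundary-component count, or equivalently the identification with the Milnor fiber of $z_1^{2g+1}+z_2^2$, shows that this plumbing is diffeomorphic to $\Sigma_{g,1}$, which is (up to isotopy) the page of the open book in the corollary. Under this identification each blue curve is the zero section $S^1$ of one of the $DT^*S^1$ factors, and each such $S^1$ sits as the equator of $S^2$. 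Thus the $M_i$'s in the type-$1$ definition may all be taken to be equator circles in $S^{n+1}=S^2$, with $n=1$, $p=2g$ and $q=0$.

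Next I would verify the symplectic and monodromy conditions. The canonical form $d\lambda_{can}$ on each $DT^*S^1$ factor glues across the plumbings to a symplectic form on the page, and a Weinstein neighborhood argument for Lagrangian spheres identifies it (up to symplectomorphism rel boundary) with the symplectic structure on the page supplied by Giroux's correspondence. Under the same identification, a positive Dehn twist along any blue curve becomes the Dehn--Seidel twist $\tau_{+1}$ along the corresponding zero-section $S^1$. Since by hypothesis the monodromy is a product of Dehn twists along the blue curves, it is generated by Dehn--Seidel twists along the zero sections of the plumbing pieces, so the supporting open book is of type-$1$. Applying Theorem \ref{2nd open book theorem} then yields the desired contact open book embedding of $(M^3,\xi)$ into $(S^5,\xi_{std})$.

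The main technical obstacle I anticipate is the symplectic matching in the first step: the smooth recognition of the blue-curve neighborhood as an $A_{2g}$-plumbing is classical, but upgrading it to a symplectomorphism (rather than just a diffeomorphism) of pages requires realizing each blue curve as an exact Lagrangian circle in the page. Once that is arranged, the Weinstein neighborhood theorem together with the flexibility inherent in Giroux's correspondence should close the gap, and the rest of the argument is a direct appeal to the preceding theorem.
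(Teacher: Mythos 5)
Your proposal is correct and follows essentially the same route as the paper: the paper's proof simply deformation retracts $\Sigma_g$ onto the linear plumbing of the cotangent bundles of the circles $b_1,a_1,c_1,\dots,a_{g-1},c_{g-1},a_g$ (illustrated by two figures) and then invokes Theorem \ref{2nd open book theorem}, exactly as you do. Your write-up merely makes explicit the chain structure, the Euler characteristic count identifying the $A_{2g}$-plumbing with $\Sigma_{g,1}$, and the Weinstein-neighborhood upgrading of the identification to a symplectomorphism, all of which the paper leaves implicit.
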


	By Giroux \cite{Gi}, if two contact structures on $M^3$ are supported by the same open book, then the contact structures are contactomorphic. Moreover, any two open books supporting a contact structure are related by stabilizations \cite{Gi}. Given any supporting open book of a contact $3$-manifold, we can stabilize it finitely many times until the resulting open book has a page homeomorphic to $\Sigma_g$. Stabilization does not change the contactomorphism type of the manifold.
     
    Note that the relative mapping class group $\mathcal{MCG}(\Sigma_g, \partial \Sigma_g)$ is generated by Dehn twists along the curves $a_1, c_1,..., a_{g-1}, c_{g-1}, a_g, b_1, b_2$ and a curve $d$,
	parallel to the boundary $\partial \Sigma_g$. But, $\tau_d = (\tau_{b_1} \circ \tau_{a_1} \circ \tau_{c_1} \circ...\tau_{a_{g-1}} \circ \tau_{c_{g-1}} \circ \tau_{a_g})^{4g+2}$ (see Proposition $4.12$ in
	\cite{FM}). Thus, $\mathcal{MCG}(\Sigma_g, \partial \Sigma_g)$ can be generated by Dehn twists along the curves $a_1, c_1,..., a_{g-1}, c_{g-1}, a_g, b_1, b_2$. Every contact open book embedding also gives an iso-contact embedding. This
	shows that the only class of $3$-dimensional contact open books with page $\Sigma_g$, which may not admit iso-contact embedding in $(S^5, \xi_{std})$, are the ones with monodromy involving Dehn twists along the red curve $b_2$ in Figure \ref{humphreygen}. 
	
	We also note that a contact manifold $(M^3,\xi)$ may have a supporting open book with page $\Sigma_g$ and monodromy involving Dehn twists along $b_2$, and still can iso-contact embed in $(S^5,\xi_{std})$. Examples of such open books are discussed in section \ref{nonexample}.

\subsection{Acknowledgement} The author is grateful to Dishant M. Pancholi for his help and support during this work. He thanks John B. Etnyre for helpful comments. He also thanks the referee for valuable comments and suggestions, which helped improve this article. During this work, the author was supported by the National Board of Higher Mathematics, DAE, Govt. of India (grant no. 2/39(18)/2013/R and D/15916).

\section{preliminaries}\label{prelim}

\subsection{Contact open book and contact open book embedding} \label{contact open book}

We start with a contact analogue of the abstract open book decomposition. This construction of an \emph{abstract contact open book} is due to Thurston and Winkelnkemper \cite{TW}. The discussion here is based on the lecture notes by Otto Van Koert \cite{Ko}.

\

Let $(V,\partial V,d\alpha)$ be an exact symplectic manifold, with a collar neighborhood $N(\partial V)$ symplectomorphic to $((-1,0] \times \partial V,d(e^t \cdot \alpha))$, for $t \in (-1,0]$. The Liouville vector field $Y$ for $d \alpha$ is defined by $i_Yd\alpha = \alpha$. So, near boundary it looks like $\frac{\partial}{\partial t}$ and is transverse to $\partial V$, pointing outwards. The $1$-form $e^t \cdot \alpha$ induces a contact structure on $\partial V$. Let $\phi$ be a symplectomorphism of $(V,d\alpha)$ such that $\phi$ is identity in a neighborhood of the boundary. The following lemma, due to Giroux, shows that we can assume $\phi^*\alpha - \alpha$ to be exact.

\begin{lemma}[Giroux] \label{giroux lemma}
	The symplectomorphism $\phi$ of $(V,d\alpha)$ is isotopic, via symplectomorphisms which are identity near $\partial V$, to a symplectomorphism $\phi_1$ such that $\phi_1^*\alpha - \alpha$ is exact.
\end{lemma}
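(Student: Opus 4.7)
The plan is to correct $\phi$ by composing it with the time-$1$ map of a carefully chosen symplectic flow designed to kill the de Rham class of the error term. First I would record the elementary properties of $\beta := \phi^*\alpha - \alpha$. Since $\phi^*(d\alpha) = d\alpha$, we have $d\beta = 0$; and since $\phi$ is the identity in some collar neighborhood $U$ of $\partial V$, the form $\beta$ vanishes identically on $U$. Hence $[\beta] \in H^1(V)$ has trivial restriction to $\partial V$, and by the long exact sequence of the pair $(V,\partial V)$ it lies in the image of $H^1(V,\partial V) \to H^1(V)$. Consequently I can choose a closed $1$-form $\sigma$ on $V$, supported away from $\partial V$, with $[\sigma] = -[\beta]$ in $H^1(V)$.

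Next I would convert $\sigma$ into a symplectic vector field using the non-degeneracy of $d\alpha$. Let $X$ be the unique vector field on $V$ satisfying $i_X d\alpha = \sigma$. Because $d\alpha$ is non-degenerate and $\sigma$ vanishes on a neighborhood of $\partial V$, the same is true of $X$; and since $d(i_X d\alpha) = d\sigma = 0$, the vector field $X$ is symplectic. Its flow $\psi^t_X$ is therefore a symplectic isotopy whose members are the identity near $\partial V$. I would then set $\phi_t := \phi \circ \psi^t_X$ and $\phi_1 := \phi \circ \psi^1_X$; this gives an isotopy from $\phi$ to $\phi_1$ through symplectomorphisms that are the identity in a neighborhood of $\partial V$.

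The last step is verifying exactness of $\phi_1^*\alpha - \alpha$. By Cartan's formula, $\mathcal{L}_X \alpha = d(i_X \alpha) + i_X d\alpha = d(i_X \alpha) + \sigma$, and hence
\[
(\psi^1_X)^*\alpha - \alpha \;=\; \int_0^1 (\psi^t_X)^* \mathcal{L}_X \alpha \, dt
\]
is cohomologous to $\sigma$ (the $d(i_X\alpha)$ piece integrates to an exact form, and each $(\psi^t_X)^*\sigma$ has the same class as $\sigma$ because $\psi^t_X$ is isotopic to the identity). For the same reason $(\psi^1_X)^*\beta$ is cohomologous to $\beta$. Assembling,
\[
\phi_1^*\alpha - \alpha \;=\; \bigl((\psi^1_X)^*\alpha - \alpha\bigr) + (\psi^1_X)^*\beta,
\]
whose class in $H^1(V)$ equals $[\sigma] + [\beta] = 0$; therefore $\phi_1^*\alpha - \alpha$ is exact. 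I expect the main obstacle to be purely the cohomological step of representing $[\beta]$ by a closed $1$-form compactly supported in the interior; once that is in hand, the Cartan-calculus bookkeeping is routine.
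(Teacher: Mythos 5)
Your proof is correct, and it is essentially the standard argument that the paper defers to (the proof in van Koert's notes \cite{Ko}): turn the closed form $\phi^*\alpha-\alpha$ into a symplectic vector field via $d\alpha$, flow, and check that the resulting correction cancels the de Rham class of the error term. The only cosmetic difference is your detour through the long exact sequence of $(V,\partial V)$: since $\beta=\phi^*\alpha-\alpha$ already vanishes on the collar where $\phi=\mathrm{id}$, you can simply take $\sigma=-\beta$ and skip that step.
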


For a proof of the above lemma see \cite{Ko}.

Let $\phi^*\alpha - \alpha = dh$. Here, $h: V \rightarrow \mathbb{R}$ is a function well defined up to addition by constants. Note that $dt + \alpha$ is a contact form on $\mathbb{R} \times V$, where the $t$ coordinate is along $\mathbb{R}$. Consider the mapping torus $\mathcal{MT}(V,\phi)$ defined by the following map.

\begin{alignat*}{2}
\Delta: (\mathbb{R} \times V, dt + \alpha) &\longrightarrow& (\mathbb{R} \times V, dt + \alpha) \\
(t,x) &\longmapsto& (t-h,\phi(x)) 
\end{alignat*}

The contact form $dt + \alpha$ then descends to a contact form $\lambda$ on $\mathcal{MT}(V,\phi)$. Since $\phi$ is identity near $\partial V$, a contact neighborhood of the boundary of $\mathcal{MT}(V,\phi)$ looks like $((-\frac{1}{2},0) \times \partial V \times S^1, e^r\cdot\alpha|_{\partial V} + dt)$. Let $A(r, R) = \{z \in \mathbb{C} \ | \ r < |z| < R\}$. Define $\Phi$ as follows.

\begin{alignat*}{2}
\Phi: \partial V \times A(\frac{1}{2},1) &\longrightarrow& (-\frac{1}{2},0) \times \partial V \times S^1 \\
(v, r e^{it}) &\longmapsto& (\frac{1}{2} - r, v, t) 
\end{alignat*}

Using $\Phi$, we can glue $\mathcal{MT}(V,\phi)$ and $\partial V \times D^2$ along a neighborhood of their boundary, such that under $\Phi$, the $1$-form $\lambda$ pulls back to $(e^{\frac{1}{2}- r} \cdot \alpha|_{\partial V} + dt)$ on $\partial V \times A(\frac{1}{2}, 1)$. We want to extend this $1$-form to a form $\beta = h_1(r)\cdot \alpha|_{\partial V} + h_2(r)\cdot dt$, such that $\beta$ is contact in the interior of $\partial V \times D^2$. We can choose the functions $h_1$ and $h_2$ (see Figure $2$) so that $\beta$ becomes a globally defined contact form on $W^{2n+1} = \mathcal{MT}(V,\phi) \cup_{id} \partial V \times D^2$, and it coincides with $dt + \alpha$ on $\mathcal{MT}(V,\phi)$ and with $\alpha + r^2 dt$ on $\partial V \times D^2$. We will denote the resulting contact manifold $(W^{2n+1},\beta)$ as $\mathcal{A}ob(V,d\alpha;\phi)$.

\begin{figure}\label{h1 n h2}
	\begin{tikzpicture}
	\draw[thick,->] (0,0) -- (5,0) node[anchor=north east] {$r$};
	\draw[thick,->] (0,0) -- (0,4) node[anchor=north east] {$h_1(r)$}; 
	\draw (3.5 cm,1pt) -- (3.5 cm,-1pt) node[anchor=north] {$\frac{1}{2}$};
	\draw[very thick] (0,3) -- (0.5,3);
	\draw[very thick] (0.5,3) parabola (2,2.5);
	\draw[very thick] (4.2,1.5) parabola (2,2.5);
	\draw[black,thick,dashed] (3.5,0) -- (3.5,4);
	\draw (0,3) node[anchor=east]{$h_1(0)$};
	\draw[black,thick,dashed] (0,3) -- (4,3);

	\draw[thick,->] (8,0) -- (13,0) node[anchor=north east] {$r$};
	\draw[thick,->] (8,0) -- (8,4) node[anchor=north east] {$h_2(r)$}; 
	\draw (11.5 cm,1pt) -- (11.5 cm,-1pt) node[anchor=north] {$\frac{1}{2}$};
	\draw[very thick] (8,0) -- (8.5,0);
	\draw[very thick] (8.5,0) parabola (9.5,2);
	\draw[very thick] (10.5,3) parabola (9.5,2);
	\draw[very thick] (10.5,3) -- (12,3);
	\draw[black,thick,dashed] (11.5,0) -- (11.5,3.5);
	\draw[black,thick,dashed] (8,3) -- (12,3);
	
	\end{tikzpicture}	
	\caption[]{Functions for the contact form near binding}
\end{figure}
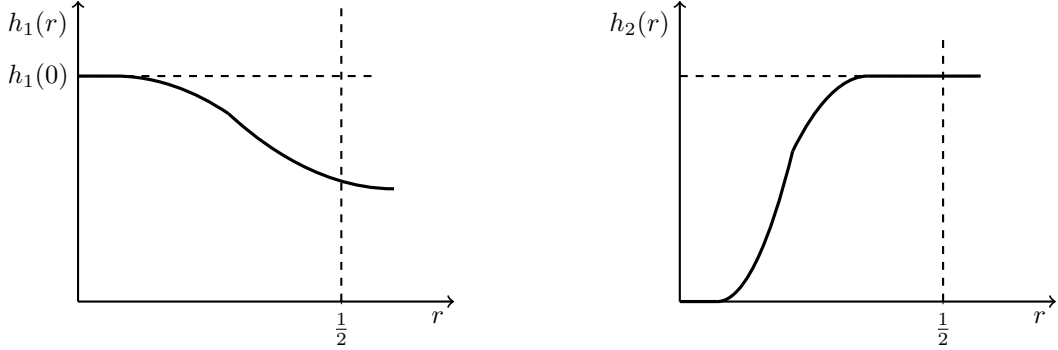

The contact manifold $\mathcal{A}ob(V,d\alpha,\phi)$ depends on the symplectic isotopy class of the monodromy $\phi$. There is an important symmetry property for open books. Let $(V,d\alpha)$ be as above. If $\phi_1$ and $\phi_2$ are symplectomorphisms of $(V,d\alpha)$, then $\mathcal{A}ob(V,d\alpha,\phi_1 \circ \phi_2)$ is contactomorphic to $\mathcal{A}ob(V,d\alpha,\phi_2 \circ \phi_1)$.

\begin{definition}[Contact open book]
	$\mathcal{A}ob(V, d\alpha,\phi)$ is called a \emph{contact open book} with \emph{page} $(V,d\alpha)$ and \emph{monodromy} $\phi$. $(\partial V,\alpha)$ is called the \emph{binding}. 
\end{definition}

Given a contact manifold $(M,\alpha)$ with a contact $1$-form $\alpha$, if one can find an open book $\mathcal{A}ob(V_M,\phi_M)$ of $M$, such that $d\alpha$ restricts to a symplectic form on $V_M$ and $\alpha$ induces, positive orientation on $M$ and positive contact orientation on $\partial V_M$, then one says that $\mathcal{A}ob(V_M,\phi_M)$ is an open book decomposition of $M$ \emph{supporting} the contact form $\alpha$.

If a contact manifold $(M,\xi)$ has a contact form $\alpha$ representing $\xi$, such that $\alpha$ has a supporting open book, then we say that $(M,\xi)$ has a \emph{supporting open book}. Sometimes, we may write $(M,\xi) = \mathcal{A}ob(V_M,d\alpha_M;\phi_M)$ to say that $(M,\xi)$ is $supported$ by the open book with page $(V_M,d\alpha_M)$ and monodromy $\phi_M$. Giroux \cite{Gi} then shows that any contact manifold $(M,\xi)$ has a supporting open book.

\begin{exmp}
	$(S^{2n+1},\xi_{std})$ has a contact open book decomposition with page $(D^{2n},\Sigma_{i=1}^n r_idr_id\theta_i)$ and monodromy identity.  
\end{exmp}

\begin{definition}[contact open book embedding]\label{contact ob embed def}
	$(M_1,\xi_1)$ \emph{contact open book embeds} into $(M_2,\xi_2)$, if there exist supporting contact open books $\mathcal{A}ob(\Sigma_1,d\alpha_1,\phi_1)$ and $\mathcal{A}ob(\Sigma_2,d\alpha_2,\phi_2)$, for $(M_1^{2n+1},\xi_1)$ and $(M_2^{2n+1},\xi_2)$ respectively, such that the following conditions hold.
	
	\begin{enumerate}
		\item There exists a proper symplectic embedding $g : (\Sigma_1,d\alpha_1) \rightarrow (\Sigma_2,d\alpha_2)$, i.e., $g^*d\alpha_2 = d\alpha_1$ ,
		\item $g \circ \phi_1  = \phi_2 \circ g$.
	\end{enumerate}  
\end{definition}

An embedding $\iota$, of a contact manifold $(N_1,\eta_1)$ into another contact manifold $(N_2,\eta_2)$, is called an \emph{iso-contact} embedding, if $D\iota(TN_1)$ is transverse to $\eta_2$ and $D\iota(TN_1)\cap{\eta_2} = D\iota(\eta_1)$. The above definition then implies that the mapping torus $\mathcal{MT}(\Sigma_1,\phi_1)$ iso-contact embeds into the mapping torus $\mathcal{MT}(\Sigma_2,\phi_2)$. Since $g|_{\partial \Sigma_1}$ pulls back the contact form $\alpha_2$ to $h\cdot\alpha_1$ for some positive function $h$ on $\partial \Sigma_1$, we can extend this embedding to an iso-contact embedding $\mathcal{I}$ of $\mathcal{A}ob(\Sigma_1,d\alpha_1,\phi_1)$ into $\mathcal{A}ob(\Sigma_2,d\alpha_2,\phi_2)$, such that the restriction of $\mathcal{A}ob(\Sigma_2,d\alpha_2,\phi_2)$ on the image of $\mathcal{I}$ gives the supporting contact open book $\mathcal{A}ob(\Sigma_1,d\alpha_1,\phi_1)$ of $(M_1,\xi_1)$. 

\subsection{Dehn-Seidel twist} \label{dehn twist} 

Consider the symplectic structure on the cotangent bundle $(T^*S^n,d\lambda_{can})$. Here, $\lambda_{can}$ is the canonical 1-form on $T^*S^n$. In local coordinates $(x_1,x_2,...,x_{n+1},y_1,y_2,...,y_{n+1})$, $\lambda_{can}$ is given by the form $\sum y_idx_i$. Here, we regard $T^*S^n$ as a submanifold of $\mathbb{R}^{2n+2} \cong \mathbb{R}^{n+1} \times \mathbb{R}^{n+1}$. A point $(\vec{x},\vec{y}) \in \mathbb{R}^{n+1} \times \mathbb{R}^{n+1}$, represents a point in $T^*S^n$ if and only if it satisfies the relations: $|\vec{x}| = 1$ and $\vec{x} \cdot \vec{y} = 0$. Here, $\vec{y} \equiv (y_1,..,y_{n+1})$ and $\vec{x} \equiv (x_1,..,x_{n+1})$. 

\

Let $\sigma_t : T^*S^n \rightarrow T^*S^n$ be a map defined as follows.

\begin{equation*}
\sigma_t(\vec{x},\vec{y}) = \begin{pmatrix}
\cos t&|\vec{y}|^{-1}\sin t\   \\  -|\vec{y}|\sin t&\cos t 
\end{pmatrix}
\begin{pmatrix}
\vec{x} \ \\ \vec{y}
\end{pmatrix}
\end{equation*}

\

For $k \in \mathbb{Z}_{>0}$, let $g_k : [0,\infty) \rightarrow \mathbb{R}$ be a smooth function that satisfies the following properties.

\begin{enumerate}
	\item $g_k(0) = k\pi$ and ${g'_k}(0) < 0$.
	\item Fix $p_0 > 0$. The function $g_k(|\vec{y}|)$ decreases to $0$ at $p_0$ and then remains $0$ for all $\vec{y}$ with $|\vec{y}|>p_0$. See Figure $3$. 
\end{enumerate}
\begin{figure}\label{dehntwistpic}
	
	\begin{tikzpicture}
	
	\draw[thick,->] (0,0) -- (4,0) node[anchor=north east] {$|\vec{y}|$} ;
	\draw[thick,->] (0,0) -- (0,4)node[anchor=north east] {$g_k$}; 
	\draw[very thick] (0,3) parabola  (2,1);
	\draw[very thick](3,0) parabola (2,1);
	\draw[very thick] (3,0) -- (3.5,0);
	\draw (3 cm,1pt) -- (3 cm,-1pt) node[anchor=north] {$p_0$};
	
	\draw (0,3) node[anchor=east] {$k\pi$};
	
	\end{tikzpicture}
	\caption{}	
	
\end{figure}

Now we can define the \textit{positive $k$-fold Dehn-Seidel twist} as follows.

$$\tau_k(\vec{x},\vec{y}) = \begin{cases}
\sigma_{g_k(|\vec{y}|)}(\vec{x},\vec{y})  \  \   for \ \ \vec{y} \neq \vec{0}\\
-Id  \  \ for \ \ \vec{y} = \vec{0}
\end{cases}$$

The Dehn-Seidel twist is a proper symplectomorphism of $T^*S^n$. From Figure $3$, we see that $\tau_k$ has compact support. Therefore, choosing $p_0$ properly, $\tau_k$ can be defined on the unit disk bundle $(DT^*S^n,d\lambda_{can})$, such that it is identity near boundary. In fact, we can choose the support as small as we wish without affecting the symplectic isotopy class of the resulting $\tau_k$. More precisely, let $g_k^1$ and $g_k^2$ be two functions similar to $g_k$ as above. Say, $g_k^1$ has support $p_1$ and $g_k^2$ has support $p_2$. Then
$\sigma_{t g_k^1(|\cdot|) + (1-t) g_k^2(|\cdot|)}$ gives a symplectic isotopy between $\sigma_{g_k^1(|\cdot|)}$ and $\sigma_{g_k^2(|\cdot|)}$.

\

Similarly, for $k<0$, we can define the \textit{negative $k$-fold Dehn-Seidel twist}. For $k = 0$, $\tau_0$ is defined to be the identity map of $DT^*S^n$. Sometimes we may say just \emph{Dehn twist} instead of \emph{Dehn-Seidel twist}.

\begin{exmp}[Contact open book embedding and Dehn-Seidel twist]

	An important open book decomposition of $(S^{2n+1},\xi_{std})$ is given with page $(DT^*S^n,d\lambda^n_{can})$ and monodromy a positive Dehn-Seidel twist. In terms of the coordinates discussed above, the standard inclusion of $S^n$ in $S^{n+k}$ is given by $(x_1,x_2,...,x_{n+1}) \mapsto (x_1,x_2,...,x_{n+1},0,0,...,0)$. This induces a proper symplectic embedding of $(DT^*S^n,d\lambda^n_{can})$ in $(DT^*S^{n+k},d\lambda^{n+k}_{can})$. In general, any embedding between manifolds induces a proper symplectic embedding between the corresponing cotangent bundles. The map can be described in the following way. 
	
	Let $W_1 = DT^*M_1$ and $W_2 = DT^*M_2$. A diffeomorphism $f : M_1 \rightarrow M_2$, induces the diffeomorphism $f_{\#} : W_1 \rightarrow W_2$, given by $f_{\#}(x_1, \rho_1) = (f(x_1), \rho_2)$,
	such that $\rho_1 = df_{x_1}^*\rho_2$. Here, $f_{\#}$ pulls back the canonical $1$-form on $W_2$ to the canonical $1$-form on
	$W_1$. Now, consider an embedding $\iota : M^m \rightarrow N^{m+k}$. Note that $\iota^*(DT^*N) = DT^*\iota(M) \bigoplus \nu^*(\iota)$. Here,
	$\nu^*(\iota) = \{(\iota(x), \eta(\iota(x))) \in DT^*N \ \ | \ \ \iota^*(\iota(x), \eta(\iota(x))) = (x, 0) \in DT^*M \}$. Thus, every $1$-form $(\iota(x), \rho(\iota(x))) \in DT^*N$, over a point $\iota(x) \in N$, can be uniquely decomposed into sum of two forms $(\iota(x), \rho_M(x)) \in DT^*\iota(M)$
	and $(\iota(x), \rho_{\nu}(x)) \in \nu^*(\iota)$. Now, we can define the induced map $\iota_{\#} : DT^*M \rightarrow DT^*N$ by $(x, \rho_0(x)) \mapsto
	(\iota(x), \rho_M(\iota(x)))$, such that $\iota_x^*(\rho_M(\iota(x))) = \rho_0(x)$. It follows from the above discussion that $\iota_{\#}$ pulls back the canonical $1$-form on $DT^*N$ to the canonical $1$-form on $DT^*M$. 
	
	Note that in case of $S^n \subset S^{n+k}$, we can take advantage of the simple coordinate system on $T^*S^m \subset \mathbb{R}^{m+1} \times \mathbb{R}^{m+1}$, to write the induced symplectic embedding on the cotangent bundles. In section $3$, we will take this approach to write down the symplectic embedding in explicit coordinates.
	
	Now, observe that a Dehn-Seidel twist on $DT^*S^{n+k}$ induces a Dehn-Seidel twist on the embedded $DT^*S^n$. Therefore, $(S^{2n+1},\xi_{std}) = \mathcal{A}ob(DT^*S^n,d\lambda^n_{can},\tau_1)$ contact open book embeds in $(S^{2n+2k+1},\xi_{std}) = \mathcal{A}ob(DT^*S^{n+k},d\lambda^{n+k}_{can},\tau_1)$. 
\end{exmp}

\subsection{Stabilization of contact open books and overtwisted contact structure}\label{stabilization}

Let $\pi_i : E_i \rightarrow B^n_i$ be an $n$-disk bundle over $B^n_i$, for $i = 1,2$. Choose a point $x_i$ and a disk neighborhood $D^n_i$ of $x_i$ in $B_i$, such that $\pi_i^{-1}(D^n_i)$ is diffeomorphic to $D^n_i \times D^n$, for $i = 1,2$.

The \emph{plumbing} of $E_1$ and $E_2$, at $(x_1, x_2) \in E_1 \times E_2$, is obtained by identifying $D^n_1 \times D^n$ with $D^n_2 \times D^n$ by the following map and then smoothing the corners.

\begin{alignat*}{2}
D^n_1 \times D^n &\xrightarrow{\chi}& D^n_2 \times D^n \\ (q,p) &\longmapsto& (-q,p) 
\end{alignat*}

We denote the plumbing of $E_1$ and $E_2$ by $E_1 \mathsection E_2$.

Now, consider two copies of the unit disk cotangent bundles of sphere, $(DT^*S^n_1, dp_1 \wedge dq_1)$ and $(DT^*S^n_2, dp_2 \wedge dq_2)$, with the canonical symplectic structures on them. Since locally $\chi^{*}(dp_2 \wedge dq_2) = dp_1 \wedge dq_1$, we get an induced symplectic structure on $DT^*S^n_1 \mathsection  DT^*S^n_2$, denoted by $dp_1 \wedge dq_1 \mathsection dp_2 \wedge dq_2$.

\begin{definition}
	
	Consider an open book decomposition given by $\mathcal{A}ob(DT^*M^n,d\lambda_M,\phi_M)$. We call the modified open book, $\mathcal{A}ob(DT^*M^n \mathsection DT^*S^n,\phi_M \circ \tau_1)$, a \textit{positive stabilization} of $\mathcal{A}ob(DT^*M^n,\phi_M)$. When $\tau_1$ is replaced by $\tau_{-1}$, we call the modified open book, a \textit{negative stabilization} of $\mathcal{A}ob(DT^*M^n,\phi_M)$.	
	
\end{definition}

We should mention that the above definition is not the most general definition of stabilization used in the literature. However, it suffices for the purpose of the present article. In general, consider a contact open book $\mathcal{A}ob(W^{2n}, \omega = d\lambda,\phi)$. Let $S_L^{n-1} \subset (\partial W,\lambda)$ be a Legendrian sphere, i.e., $\lambda|_{S_L^{n-1}} = 0$. Attach a Weinstein $n$-handle $H_{2n}$ to $W$, along $S_L$. Say, $D_L^n \subset W$ is a Lagrangian disk ($d\lambda|_{D_L^n} = 0$) with boundary $S_L$. Let $\mathcal{L}$ be the Lagrangian $n$-sphere formed by $D_L^n$ and the Lagrangian core disk of $H_{2n}$. Let $\tilde{\omega}$ denote the resulting symplectic structure on $W^{2n} \cup H_{2n}$ and let $\tau_{\mathcal{L}}$ denote a positive Dehn-Seidel twist along $\mathcal{L}$. Then, $\mathcal{A}ob(W^{2n} \cup H_{2n},\tilde{\omega}, \phi \circ \tau_{\mathcal{L}})$ is called a positive stabilization of $\mathcal{A}ob(W^{2n}, \omega, \phi)$. It is well known that a positive stabilization does not change the contactomorphism type of the total manifold. For a proof of this fact and more details on Weinstein handles, see \cite{W} and \cite{Ko}.

In \cite{CMP}, Casals, Murphy and Presas gave a characterization of an \emph{overtwisted} contact structure in terms of open books. They showed that every overtwisted contact structure is a negative stabilization of some open book decomposition. In particular, $\mathcal{A}ob(DT^*S^n,d\lambda_{can},\tau_{-1})$ gives an overtwisted contact structure on $S^{2n+1}$. We will denote this overtwisted contact structure by $\xi_{ot}$.

\subsection{Boundary connected sum}\label{bandsum}

Consider two disjoint symplectic manifolds $(W_1,\partial W_1,\omega_1)$ and $(W_2,\partial W_2,\omega_2)$ with convex boundaries. If we attach an Weinstein $1$-handle along two points $w_1 \in \partial W_1$ and $w_2 \in \partial W_2$, then we get the boundary connected sum of $(W_1,\omega_1)$ and $(W_2,\omega_2)$ denoted by $(W_1 \#_b W_2, \omega_1 \#_b \omega_2)$.

Now consider two contact open books $\mathcal{A}ob(\Sigma_1,d\alpha_1,\phi_1)$ and $\mathcal{A}ob(\Sigma_2,d\alpha_2,\phi_2)$. It is known that  $$\mathcal{A}ob(\Sigma_1 \#_b \Sigma_2, d\alpha_1 \#_b d\alpha_2, \phi_1 \circ \phi_2) \cong \mathcal{A}ob(\Sigma_1,d\alpha_1,\phi_1) \# \mathcal{A}ob(\Sigma_2,d\alpha_2,\phi_2)$$ (see section $2.4$ in \cite{DGK}). Here the connected sum, denoted by $\#$, is a contact connected sum (see section $2.2$ in \cite{Ge}).

\section{Proof of Theorems}

The standard inclusion of $S^n$ in $S^{n+1}$ induces a proper symplectic embedding $j_0 : (DT^*S^n,\lambda^n_{can}) \hookrightarrow (DT^*S^{n+1},\lambda^{n+1}_{can})$. As discussed in section \ref{dehn twist}, we consider $DT^*S^n \subset \mathbb{R}^{n+1} \times \mathbb{R}^{n+1}$ and $DT^*S^{n+1} \subset \mathbb{R}^{n+2} \times \mathbb{R}^{n+2}$. So, in terms of coordinates, $j_0(\vec{x},\vec{y}) = (\vec{x},0,\vec{y},0)$. Recall that $(\vec{x},\vec{y}) \in \mathbb{R}^{n+1} \times \mathbb{R}^{n+1}$ satisfies $|\vec{x}| = 1$ and $\vec{x} \cdot \vec{y} = 0$. Let $j_1 : (DT^*S^n,\lambda^n_{can}) \hookrightarrow (DT^*S^{n+1},\lambda^{n+1}_{can})$ be another proper symplectic embedding given by : $j_1(\vec{x},\vec{y}) = (\vec{x},0,\vec{y},g(|\vec{y}|))$. Here, $g$ is a smooth cut-off function as described in Figure $4$. Note that $g$ is supported in $[0,\delta]$ and $g(0) = \epsilon$, $g(\frac{\delta}{2}) > \frac{\epsilon}{2}$.

\begin{figure}\label{isocutoff}
	
	\begin{tikzpicture}
	
	\draw[thick,->] (0,0) -- (4,0) node[anchor=north east] {$|\vec{y}|$} ;
	\draw[thick,->] (0,0) -- (0,4)node[anchor=north east] {$g$}; 
	\draw[very thick] (0,3) parabola  (2,1);
	\draw[very thick](3,0) parabola (2,1);
	\draw[very thick] (3,0) -- (3.5,0);
	\draw (3 cm,2pt) -- (3 cm,-2pt) node[anchor=north] {$\delta$};
	\draw (1.5 cm,2pt) -- (1.5 cm,-2pt) node[anchor=north] {$\frac{\delta}{2}$};
	\draw (0,3) node[anchor=east] {$\epsilon$};
	\draw[dashed] (1.5,0) -- (1.5,1.9);
	\draw[dashed] (1.5,1.9) -- (0,1.9);
	\draw (2pt,1.5 cm) -- (-2pt, 1.5 cm) node[anchor=east] {$\frac{\epsilon}{2}$};
	
	\end{tikzpicture}
	\caption{}	
	
\end{figure}

\

The next two lemmas are the main ingredients to prove our theorems.

\begin{lemma} \label{open book lemma 1}
	$j_0$ is symplectic isotopic to $j_1$.
\end{lemma}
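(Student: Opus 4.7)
The plan is to write down an explicit isotopy by linearly interpolating the extra $Y_{n+2}$-coordinate. Define, for $t \in [0,1]$,
\begin{equation*}
j_t(\vec{x},\vec{y}) \;=\; (\vec{x},\,0,\,\vec{y},\,t\,g(|\vec{y}|)),
\end{equation*}
so that $j_0$ and $j_1$ are recovered at the endpoints. I would first check that this formula lands inside $DT^*S^{n+1} \subset \mathbb{R}^{n+2}\times \mathbb{R}^{n+2}$. The first constraint $|(\vec{x},0)| = |\vec{x}| = 1$ is immediate, and the orthogonality $(\vec{x},0)\cdot(\vec{y},tg(|\vec{y}|)) = \vec{x}\cdot \vec{y} = 0$ follows from the corresponding relation on $DT^*S^n$. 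The only nontrivial point is the disk constraint $|\vec{y}|^2 + t^2 g(|\vec{y}|)^2 \leq 1$; since $g$ is supported in $[0,\delta]$ with $g(0)=\e$, this is guaranteed once $\e$ and $\delta$ are chosen so that $\delta^2 + \e^2 \leq 1$, which is built into the cut-off already.

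Next I would verify that each $j_t$ is a proper symplectic embedding. Injectivity is clear from reading off $\vec{x}$ and $\vec{y}$ from the first $n+1$ components of each block, and smoothness of $g$ makes $j_t$ smooth in $(\vec{x},\vec{y},t)$. Properness follows since $g$ vanishes on a neighborhood of $|\vec{y}|=1$, so boundary points $(|\vec{y}|=1)$ are mapped to $(\vec{x},0,\vec{y},0)$ which lies on $\partial DT^*S^{n+1}$. For the symplectic condition, the key computation is direct: writing the canonical 1-form of $DT^*S^{n+1}$ as $\lambda^{n+1}_{can} = \sum_{i=1}^{n+2} Y_i\, dX_i$, we get
\begin{equation*}
j_t^* \lambda^{n+1}_{can} \;=\; \sum_{i=1}^{n+1} y_i\, dx_i \;+\; t\,g(|\vec{y}|)\, d(0) \;=\; \lambda^n_{can},
\end{equation*}
since the last coordinate $X_{n+2}$ is identically $0$. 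Differentiating gives $j_t^* d\lambda^{n+1}_{can} = d\lambda^n_{can}$ for every $t$, so $j_t$ is a symplectic embedding for each $t$.

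Putting these together, $\{j_t\}_{t\in[0,1]}$ is a smooth one-parameter family of proper symplectic embeddings connecting $j_0$ and $j_1$, which is the desired symplectic isotopy. The only delicate part of the argument is the initial calibration of the cut-off $g$ so that the image of $j_t$ remains inside the unit disk bundle, but as noted this is automatic from the choices of $\e,\delta$ implicit in Figure $4$. The algebraic verification of $j_t^*\lambda^{n+1}_{can}=\lambda^n_{can}$ is trivial precisely because the added component lies in the $X_{n+2}=0$ direction, which kills its contribution to the pulled-back Liouville form.
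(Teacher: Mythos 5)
Your proposal is correct and follows essentially the same route as the paper: the same explicit linear interpolation $j_t(\vec{x},\vec{y}) = (\vec{x},0,\vec{y},t\,g(|\vec{y}|))$, with the symplectic condition following because the added component sits over the $x_{n+2}=0$ direction (the paper pulls back the $2$-form $\sum dx_i\wedge dy_i$ and uses $j_t^*(dx_{n+2})=0$; you pull back the Liouville $1$-form, which is the same observation). Your additional checks of the disk-bundle constraint and properness are sound and only make explicit what the paper leaves implicit in the choice of $\e$ and $\delta$.
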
	

\begin{proof}	Define $j_t(\vec{x},\vec{y}) = (\vec{x},0,\vec{y},t \cdot g(|\vec{y}|))$. Now, $j^*_t(d\lambda^{n+1}_{can}) = j^*_t(\displaystyle\sum_{i=1}^{n+2} dx_i \wedge dy_i) = \displaystyle\sum_{i=1}^{n+1} dx_i \wedge dy_i + j^*_t(dx_{n+2} \wedge dy_{n+2})$. Since $j^*_t(dx_{n+2}) = 0$, we have $j^*_t(d\lambda^{n+1}_{can}) = d\lambda^n_{can}$.
\end{proof}

Note that the isotopy $j_t$ only moves $T^*S^n$ inside $T^*S^{n+1}$, without inducing any non-trivial monodromy on $T^*S^n$.

In general, let $M^n$ be an oriented closed hyper surface in $S^{n+1}$. The normal bundle is then isomorphic to $M \times \mathbb{R}$. Thus, we get an induced symplectic embedding of $(DT^*M,d\lambda_M)$ in $(DT^*S^{n+1},d\lambda^{n+1}_{can})$, such that a symplectic tubular neighborhood of this embedding is isomorphic to $(DT^*M\times T^*\mathbb{R}^1, d\lambda_M \oplus dx\wedge dy)$. We can now define a similar symplectic isotopy sending $(p,v,0,0) \mapsto (p,v,0, t \cdot g(|v|))$ in $T^*M\times T^*\mathbb{R}^1$.

\

The following lemma is an adaptation of Proposition $4$ in \cite{Au} to our setting. The proof is the same as in \cite{Au}, with slight modifications.  

\begin{lemma}\label{open book lemma 2}
	Let $(V,\partial V,d\lambda_V) $ and $(W,\partial W,d\lambda_W)$ be two exact symplectic manifolds with convex boundaries of dimension $2m$ and $2m+2s$ respectively. Let $\psi_t : (V,\partial V) \rightarrow (W,\partial W)$ be a family of proper symplectic embeddings such that near $\partial V$, $\psi_t = \psi_0$ for all $t \in [0,1]$. Then there is a symplectic isotopy $\Psi_t$ of $(W,\partial W,d\lambda_W)$ such that $\Psi_0 = Id$ and $\Psi_1 \circ \psi_0(V) = \psi_1(V)$.
\end{lemma}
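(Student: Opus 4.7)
The plan is to carry out the standard ambient isotopy extension for symplectic submanifolds: construct a time-dependent symplectic vector field $X_t$ on $(W, d\lambda_W)$, compactly supported in the interior of $W$, whose restriction to $\psi_t(V)$ equals the velocity field of the isotopy,
$$Y_t\big(\psi_t(x)\big) := \frac{d}{dt}\psi_t(x),$$
and then take $\Psi_t$ to be the flow of $X_t$ from $\Psi_0 = \Id$. Since $X_t$ is symplectic, $\Psi_t$ preserves $d\lambda_W$, and the matching condition $X_t|_{\psi_t(V)} = Y_t$ forces $\Psi_t\circ\psi_0 = \psi_t$, so in particular $\Psi_1(\psi_0(V)) = \psi_1(V)$. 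The hypothesis that $\psi_t = \psi_0$ near $\partial V$ forces $Y_t$ to vanish near $\psi_0(\partial V) \subset \partial W$, which is what permits the required compact support of $X_t$ away from the convex boundary.

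The essential compatibility observation is that the $1$-form $\alpha_t := \psi_t^*(i_{Y_t}\, d\lambda_W)$ on $V$ is closed. Differentiating the symplectic identity $\psi_t^*(d\lambda_W) = d\lambda_V$ in $t$ and using Cartan's formula together with $d(d\lambda_W) = 0$ gives
$$0 = \frac{d}{dt}\psi_t^*(d\lambda_W) = \psi_t^*\big(\mathcal{L}_{Y_t}\, d\lambda_W\big) = d\,\psi_t^*(i_{Y_t}\, d\lambda_W) = d\alpha_t.$$
Thus the component $Y_t^T$ of $Y_t$ tangent to $\psi_t(V)$ is a symplectic vector field intrinsic to the submanifold, which is exactly the condition needed for $Y_t$ to extend to a symplectic vector field on a neighborhood of $\psi_t(V)$ in $W$.

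To build the extension, I would invoke Weinstein's symplectic neighborhood theorem to identify a tubular neighborhood $\mathcal{N}_t$ of $\psi_t(V)$ in $W$ with a neighborhood of the zero section of the symplectic normal bundle $\nu_t \to \psi_t(V)$, depending smoothly on $t$. Decompose $Y_t$ along the zero section into its tangential part (extended using the flow of $Y_t^T$ lifted via a symplectic connection on $\nu_t$) and its normal part, a section of $\nu_t$ (extended by fiberwise translation); each piece is a symplectic vector field on $\mathcal{N}_t$ agreeing with $Y_t$ on the zero section. The main obstacle will be globalization: a naive cutoff of the symplectic vector field destroys closedness of $i_{X_t}\, d\lambda_W$. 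I resolve this by cutting off at the level of a potential rather than the $1$-form. On the fiberwise contractible neighborhood $\mathcal{N}_t$, the closed $1$-form $i_{X_t}\, d\lambda_W$ admits (after absorbing any $H^1$ obstruction into a closed representative already vanishing near $\psi_t(\partial V)$, which is possible by the boundary hypothesis) a primitive $H_t$. Multiplying $H_t$ by a bump function identically $1$ on $\psi_t(V)$ with support in $\mathcal{N}_t \setminus \partial W$ and taking the associated Hamiltonian vector field produces a compactly supported symplectic vector field on $W$ still equal to $Y_t$ on $\psi_t(V)$. This is the desired $X_t$, and its flow is $\Psi_t$.
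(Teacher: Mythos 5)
Your overall strategy---extend the velocity field of the isotopy to an ambient symplectic vector field and integrate it---is the right one, and your observation that $\psi_t^*(i_{Y_t}\,d\lambda_W)$ is closed is correct and is indeed what makes a local symplectic extension over a Weinstein neighborhood possible. The gap is in the globalization step. On the tubular neighborhood $\mathcal{N}_t$ the closed $1$-form $\eta_t = i_{X_t}\,d\lambda_W$ represents a class in $H^1(\mathcal{N}_t;\R)\cong H^1(V;\R)$ which need not vanish (the pages relevant to this paper include $DT^*S^1$ and plumbings/boundary connected sums, all with nontrivial $H^1$). Writing $\eta_t=\theta_t+dH_t$ with $\theta_t$ closed and cutting off only the potential $H_t$ yields the Hamiltonian vector field of $\chi H_t$, which on $\psi_t(V)$ equals $Y_t$ \emph{minus} the $d\lambda_W$-dual of $\theta_t$; it is not ``still equal to $Y_t$ on $\psi_t(V)$'' as you claim. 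Nor can $\theta_t$ itself be cut off: multiplying a closed non-exact form by a bump function destroys closedness, and the obstruction to extending $\theta_t$ as a closed form lives at the outer boundary of $\mathcal{N}_t$ (i.e.\ in the image of $H^1(W)\to H^1(\mathcal{N}_t)$), not at $\partial W$, so the boundary hypothesis does not absorb it. As written, the flow of your $X_t$ need not carry $\psi_0(V)$ to $\psi_1(V)$.

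The gap is repairable, and comparing with the paper's proof (an adaptation of Auroux, Proposition~4) shows how. First, the lemma only requires $X_t-Y_t$ to be \emph{tangent} to $\psi_t(V)$, not zero: if you take $\theta_t=\pi^*\sigma_t$ for $\pi:\mathcal{N}_t\to\psi_t(V)$ the bundle projection and $\sigma_t$ a closed representative of the restricted class, then the symplectic normal directions lie in $\ker\theta_t$, hence the $d\lambda_W$-dual of $\theta_t$ is tangent to $\psi_t(V)$ and discarding it is harmless for transporting the submanifold (though your stronger claim $\Psi_t\circ\psi_0=\psi_t$ is lost). The paper organizes this more cleanly: it first extends $\psi_t\circ\psi_0^{-1}$ to an arbitrary ambient diffeotopy $\rho_t$, identity near $\partial W$, and then corrects it by a Moser flow kept tangent to $V_0$. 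The primitive $\beta_t=-\frac{d}{dt}\rho_t^*\lambda_W$ of the error $2$-form is then globally defined on $W$ from the outset, the replacement $\gamma_t=\pi^*(\beta_t|_{V_0})$ differs from $\beta_t|_{U_0}$ only by an exact form $df_t$, and the function $f_t$ extends over $W$ with no cohomological obstruction. You should either adopt that order of operations or supply the tangency argument above; without one of them the proof does not close.
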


\begin{proof}
	
	Let $V_t$ denote $\psi_t(V)$. Then, $\psi_t \circ \psi_0^{-1}$ gives a family of symplectomorphisms from $(V_0,{d\lambda_W}|_{V_0})$ to $(V_t,{d\lambda_W}|_{V_t})$. Since the symplectic normal bundles to all $V_t$ are isomorphic, using Weinstein symplectic neighborhood theorem, we can extend $\psi_t \circ \psi^{-1}_0$ to a family of symplectomorphisms $L_t : U_0 \rightarrow U_t$. Here, $U_t$ is a small symplectic tubular neighborhood of $V_t$ in $W$. Let $\rho_t : (W,\partial W) \rightarrow (W,\partial W)$ be any family of diffeomorphism extending $L_t$. We can assume that $\rho_t$ is identity near $\partial W$. Let $\omega_t = \rho^*_t(d\lambda_W)$ and $\Omega_t = -\frac{d\omega_t}{dt}$. We want to find vector fields $Y_t$ on $W$ such that $d\iota_{Y_t}\omega_t = \Omega_t$ and $Y_t$ is tangent to $V_0$. Let $\omega = d\lambda_W$. Let $\chi_t$ denote the flow corresponding to $Y_t$. We then have the following.

    $$\frac{d}{dt}(\chi^*_t\rho^*_t\omega) = \chi^*_t(\frac{d}{dt}(\rho^*_t\omega) + L_{Y_t}(\rho^*_t\omega)) = \chi^*_t(-\Omega_t + d\iota_{Y_t}\omega_t) = 0$$.
    
    Thus, $\rho_t \circ \chi_t$ becomes a family of symplectomorphisms of $W$. Let $\alpha_t = \iota_{Y_t}\omega_t$. Equivalently, we have to find a $1$-form $\alpha_t$ on $W$, such that $d\alpha_t = \Omega_t$ and at every point $v\in V_0$, the $\omega_W$-symplectic orthogonal $N_vV_0$ to $T_vV_0$ lies in the kernel of $\alpha_t$. Now, $$\Omega_t = -\frac{d\omega_t}{dt} = -\frac{d}{dt}(\rho^*_td\lambda_W) = -d(\frac{d}{dt}\rho^*_t\lambda_W)$$ So, defining $\beta_t = -\frac{d}{dt}\rho^*_t\lambda_W$ gives $d\beta_t = \Omega_t$. Note that $d\beta_t = 0$ over $U_0$ and since $\rho_t$ is identity near $\partial W$, $\beta_t = 0$ near $\partial U_0 \cap \partial W$. Thus, $\beta_t \in H^1(\overline{U_0},\partial \overline{U_0} \cap \partial W;\mathbb{R})$ and $\beta_t|_{V_0} \in H^1(V_0,\partial V_0;\mathbb{R})$. Let $\pi : U_0 \rightarrow V_0$ be the projection map of the symplectic normal bundle and $i_0 : V_0 \hookrightarrow U_0$ be the zero section. Let $\gamma_t = \pi^*\beta_t|_{V_0}$. By construction, $\gamma_t$ is closed over $U_0$ and for any $x\in V_0$ the normal fiber $N_xV_0$ lies in the kernel of $\gamma_t$. Moreover, the composition $\pi^* \circ i_0^*$ induces the identity map over $H^1(\overline{U_0},\partial\overline{U_0} \cap \partial W;\mathbb{R})$. Thus, $[\gamma_t] = [\beta_t|U_0]$ in $H^1(\overline{U_0},\partial\overline{U_0} \cap \partial W;\mathbb{R})$. Hence, there exists a smooth real valued function $f_t$ over $U_0$ such that $\gamma_t = \beta_t + df_t$ over $U_0$. We extend $f_t$ to some smooth real function $g_t$ over $W$ and define $\alpha_t = \beta_t + dg_t.$ The $1$-forms $\alpha_t$ satisfy $d\alpha_t = \Omega_t$, and since $\alpha_t|{U_0} = \gamma_t$, $N_xV_0$ lies in the kernel of $\alpha_t$, for all $x \in V_0$.

\end{proof}

\begin{proof}[\textbf{Proof of Theorem \ref{1st open book theorem}}]
	Recall, $(S^{2n+3},\xi_{std}) = \mathcal{A}(DT^*S^{n+1}, \tau^{n+1}_1)$. We embed $(DT^*S^n,\lambda^n_{can})$ into $(DT^*S^{n+1},\lambda^{n+1}_{can})$ via $j_0$, and apply a $k$-fold Dehn twist $\tau^{n+1}_k$ on $DT^*S^{n+1}$. This will induce the monodromy $\tau^n_k$ on $DT^*S^n$. Next, using Lemma \ref{open book lemma 1}, we isotope $\tau^n_k(DT^*S^n)$ to $j_1\circ\tau^n_k(DT^*S^n)$ inside  $DT^*S^{n+1}$ .  We now apply an $(l-k)$-fold Dehn twist $\tau_{(l-k)}^{n+1}$ on $DT^*S^{n+1}$ with support $\frac{m_0}{2}$. The number $m_0$ is chosen in the following way. 
	
	As discussed in section \ref{dehn twist}, while defining the Dehn twist, we can make the support of the function $g_k$ smaller, without changing the symplectic isotopy class of the resulting Dehn twist map. Let $S_0^{n+1}$ denote the zero section of $DT^*S^{n+1}$. Let $\mathcal{D} = d(j_1 (DT^*S^n), S_0^{n+1})$ be the distance between $j_1(DT^*S^n)$ and $S_0^{n+1}$ in $\mathbb{R}^{n+2} \times \mathbb{R}^{n+2}$, under the standard euclidean metric. For the
	discussion below, we refer to Figure $4$. Let $A_0 = (\vec{w},\vec{0}) \in S_0^{n+1}$ and $A_1(\vec{y}) = (\vec{x}, 0, \vec{y},g(|\vec{y}|)) \in j_1(DT^*S^n)$. Then the square of the distance between $A_0$ and $A_1(\vec{y})$ is given by $$d(A_0, A_1(\vec{y}))^2 = | w
	- (\vec{x}, 0)|^2 + |\vec{y}|^2 + |g(|\vec{y}|)|^2$$. Note that for $|\vec{y}| \leq \frac{\delta}{2}$, $g(|\vec{y}|) > \frac{\epsilon}{2}$ and therefore, $d(A_0, A_1(\vec{y}))^2 > \frac{\epsilon^2}{4}$. For $|\vec{y}| > \frac{\delta}{2}$, $d(A_0, A_1(\vec{y}))^2 > \frac{\delta^2}{4}$. Thus, $\mathcal{D} > \frac{1}{2} min \{\epsilon, \delta\}$. Define, $m_0 = \frac{1}{2} min \{\epsilon, \delta\}$. 
	
	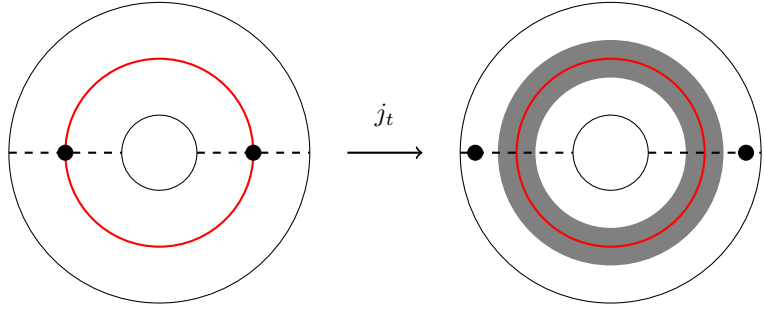
\begin{figure}\label{flex}
		
		\begin{tikzpicture}
		
		\draw[thick,->] (-0.5,0) -- (0.5,0) node[anchor=east] {};
		
		\draw (-3,0) circle (2cm);
		\draw (-3,0) circle (0.5cm);
		\draw[red,thick] (-3,0) circle (1.25);
		\draw[dashed,thick] (-5,0) -- (-3.5,0);
		\draw[dashed,thick] (-2.5,0) -- (-1,0);
		\node[draw,circle,inner sep=2pt,fill] at (-4.25,0) {};
		\node[draw,circle,inner sep=2pt,fill] at (-1.75,0) {};
	
		\draw (3,0) circle (2cm);
		\draw (3,0) circle (0.5cm);
		\fill[gray,even odd rule] (3,0) circle[radius=1cm] circle[radius=1.5cm];
		\draw[red,thick] (3,0) circle (1.25);
		\draw[dashed,thick] (1,0) -- (2.5,0);
		\draw[dashed,thick] (3.5,0) -- (5,0);
		\node[draw,circle,inner sep=2pt,fill] at (1.2,0) {};
		\node[draw,circle,inner sep=2pt,fill] at (4.8,0) {};
		\draw (0,0.5 cm) node[] {$j_t$};

		\end{tikzpicture}
		
		\caption{Using the isotopy $j_t$, we push $DT^*S^0$ away from the zero section $S_0^1$ (denoted by red circles) of $DT^*S^1$. Then we apply Dehn twist along $S_0^1$, with support in the shaded region.}	
		
	\end{figure}

	The above choice of $m_0$ will ensure that $j_1 \circ \tau_k^n(DT^*S^n)$ is not affected by $\tau_{(l-k)}^{n+1}$. Figure $5$ describes the situation for $n=0$. Lastly, we isotope $j_1\circ\tau^n_k(DT^*S^n)$ back to $\tau^n_k(DT^*S^n)$ and finish gluing the mapping torus. Using Lemma \ref{open book lemma 2}, we can extend $j_t$ to a symplectic isotopy $J_t$ of $DT^*S^{n+1}$, such that $J_0 = id$. The resultant monodromy on $DT^*S^{n+1}$ then becomes $J_1^{-1}\circ \tau^{n+1}_{(l-k)}\circ J_1\circ \tau_k^{n+1}$, which is equivalent to getting a mapping torus with monodromy $\tau^{n+1}_l$. When restricted to $DT^*S^n$, it induces the monodromy $\tau^n_k$.

\end{proof}

\begin{proof}[\textbf{Proof of Corollary \ref{1st open book corollary}}]
	By Theorem \ref{1st open book theorem}, $\mathcal{A}ob(DT^*S^n,d\lambda^n_{can},\tau_{-1})$ contact open book embeds in $(S^{2n+3},\eta_{st}) = \mathcal{A}ob(DT^*S^{n+1},d\lambda^{n+1}_{can}, \tau_1)$. By \cite{CMP}, $\mathcal{A}ob(DT^*S^n,d\lambda^n_{can},\tau_{-1})$ gives an overtwisted contact structure on $S^{2n+1}$.
\end{proof}

\begin{proof}[\textbf{Proof of Theorem \ref{2nd open book theorem}}]
	Theorem \ref{1st open book theorem} implies that $\mathcal{A}ob(DT^*S^n,d\lambda^n_{can},\tau_k)$ contact open book embeds in $\mathcal{A}ob(DT^*S^{n+1},d\lambda^{n+1}_{can},\tau_1)$. Following the discussion after Lemma \ref{open book lemma 1} and the proof of Theorem \ref{1st open book theorem}, one can also see that for $V^n \subset S^{n+1}$, $\mathcal{A}ob(DT^*V^n,id)$ contact open book embeds in $\mathcal{A}ob(DT^*S^{n+1},\tau_1)$. Moreover, there is an ambient symplectic isotopy of the identity map of $(DT^*S^{n+1},d\lambda^{n+1}_{can})$, relative to the boundary, that pushes a symplectic neighborhood of the zero section in $DT^*V^n$ away from the zero section of $DT^*S^{n+1}$. Recall that a page of a type-$1$ open book is constructed by taking plumbing and boundary connected sum of such $DT^*V^n$s and $DT^*S^n$s.

	\begin{enumerate}
		\item {\it Boundary connected sum :}
		
		Using the boundary connected sum operation described in section \ref{bandsum}, we get a symplectic embedding of $DT^*M_1^n \#_b DT^*M_2^n$ in $DT^*S_1^{n+1} \#_b DT^*S_2^{n+1}$. Assume that the monodromy map of $DT^*M_1^n \#_b DT^*M_2^n$ is identity. Here, $S_1^{n+1}$ and $S_2^{n+1}$ are used to denote two copies of $S^{n+1}$. Let $\Phi^t_i$ be the ambient symplectic isotopy of $DT^*S_i^{n+1}$ that pushes a symplectic neighborhood of the zero section in $DT^*M_i^n$ away from the zero section of $DT^*S_i^{n+1}$, for $i = 1, 2$. Since $\Phi^t_1$ and $\Phi^t_2$ are identity near boundary for all $t \in [0,1]$, we can extend these isotopies to an isotopy $\tilde{\Phi}^t$ on $DT^*S_1^{n+1} \#_b DT^*S_2^{n+1}$, by defining it identity on the attached $1$-handle of the boundary connected sum. Now, following the proof of Theorem \ref{1st open book theorem}, apply $\tilde{\Phi}^t$ to push away a neighborhood of $M^n_1 \subset DT^*M^n_1$ and $M^n_2 \subset DT^*M^n_2$, inside $DT^*S_1^{n+1}$ and $DT^*S_2^{n+1}$ respectively. Then apply positive Dehn-twists along $S_1^{n+1}$ and $S_2^{n+1}$ with small enough supports. Bring back the neighborhoods of $M^n_1$ and $M^n_2$ by $\tilde{\Phi}^{1-t}$ and complete the mapping torus. The effect of the resultant monodromy on $DT^*M_1^n \#_b DT^*M_2^n$ is identity. On $DT^*S_1^{n+1} \#_b DT^*S_2^{n+1}$, it is the composition of Dehn twists along $S_1^{n+1}$ and $S_2^{n+1}$. Thus, by section \ref{bandsum} the result follows.
		
		\item {\it Plumbing :}
		
		 Consider $DT^*S_1^n \mathsection DT^*S_2^n \hookrightarrow DT^*S_1^{n+1} \mathsection DT^*S_2^{n+1}$. Let $\phi_0$ be a symplectomorphism of $DT^*S_1^n \mathsection DT^*S_2^n$, generated by Dehn twists $\tau^1_1$ and $\tau^2_1$ along $S_1^n$ and $S_2^n$, respectively. For the moment, assume that $\phi_0 = \tau^1_l\circ\tau^2_k$. Let $t \in (0,1)$ denote the $S^1$ direction in the mapping torus of $\mathcal{A}ob(DT^*S_1^n \mathsection DT^*S_2^n,\phi_0)$. Within the $t$-interval $[\frac{1}{4},\frac{1}{3}]$, we apply an $l$-fold Dehn twist $\tau^1_l$ along $S^{n+1}_1$. Next, we isotope $DT^*S_1^n \subset DT^*S_1^{n+1}\subset DT^*S_1^{n+1} \mathsection DT^*S_2^{n+1}$, as in the proof of Theorem \ref{1st open book theorem}, to push it away from the zero section of $DT^*S_1^{n+1}$. We then apply a $(-l+1)$-fold Dehn twist along $S_1^{n+1} \subset DT^*S_1^{n+1}$, with small enough support. Finally, we isotope the pushed away $DT^*S_1^n$ back to its original place. The procedure is similar to the proof of Theorem \ref{1st open book theorem}, but here we extend the isotopy in the complement of $DT^*S_1^n \subset DT^*S_1^n \mathsection DT^*S_2^n$ by the identity map. Next, we perform the same process within the $t$-interval $[\frac{1}{2},\frac{3}{4}]$, starting with a $k$-fold Dehn twist $\tau^2_k$ along $S_2^{n+1}$. This produces an open book embedding of $\mathcal{A}ob(DT^*S_1^n \mathsection DT^*S_2^n,\phi_0)$ in $(S^{2n+3},\eta_{st})$. In general, we can factor the monodromy into Dehn twists along various copies of $S^n$ and then divide the $S^1$-interval of the mapping torus accordingly, to apply the same argument finitely many times.

		\item {\it General case :}
	The general case follows by combining the two cases above.

	\end{enumerate}

\end{proof}

\begin{proof}[\textbf{Proof of Corollary \ref{2nd open book corollary}}]
	
    The surface $\Sigma_g$, as described in Fig \ref{humphreygen}, deformation retracts onto a diffeomorphic surface as in Figure $6$. We can further deformation retract it to a diffeomorphic surface as in Figure $7$, which is obtained by plumbing
	the cotangent bundles of the circles, described by the red curves $b_1, a_1, c_1,...,a_{g-1}, c_{g-1}, a_g$ in Figure \ref{humphreygen}.
	The result now follows from Theorem \ref{2nd open book theorem}.

	\begin{figure}\label{openbookplumb1}
		\begin{center}
			\includegraphics[width=10cm,height=3cm]{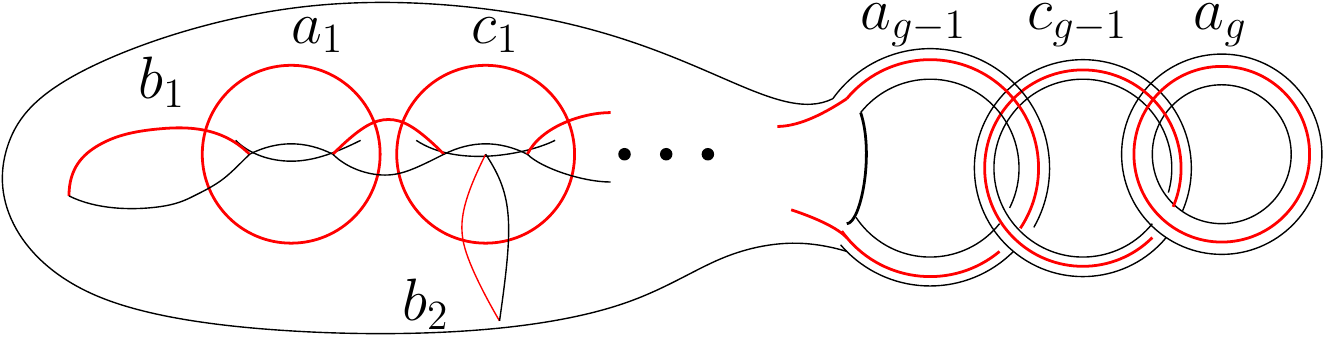}
		\end{center}
		\caption[]{}
	\end{figure}

	\begin{figure}\label{openbookplumb2}
		\begin{center}
			\includegraphics[width=10cm,height=3cm]{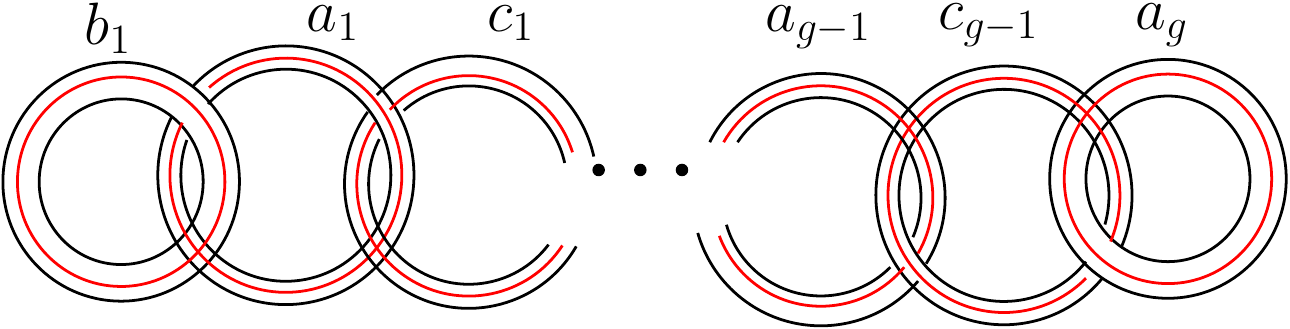}
		\end{center}
		\caption[]{}
	\end{figure}

\end{proof}

\section{comments and remarks} \label{com rem}

\subsection{Contact open book embedding of overtwisted contact 3-manifolds}

Let $\mathcal{A}ob(\Sigma, \phi)$ be a contact open book of
$(M^3, \xi)$, such that $\mathcal{A}ob(\Sigma, \phi)$ contact open book embeds in $(N^5, \eta)$, for some open book $\mathcal{A}ob(W^4, \psi)$ supporting $(N^5, \eta)$. Assume that
$M^3$ has no $2$-torsion in $H_1(M; \mathbb{Z})$. If the first Chern class $c_1(\xi) = 0$, then it is known that the overtwisted contact structures on $M^3$ are in one to one correspondence with the overtwisted contact structures on $S^3$ (see section $2.2$ and $2.3$ of \cite{EF}). The overtwisted contact structures on $S^3$ corresponds to elements in the group $\mathbb{Z}$. They are denoted by $\xi_n$ for $n \in \mathbb{Z}$. For $n \geq 1$, $\xi_n$ is supported by the open book $\mathcal{A}ob(DT^*S^1 \mathsection...n\ \ copies...\mathsection DT^*S^1, \tau_{-1} \circ...n \ \ times...\circ \tau_{-1})$. Thus, by Theorem \ref{1st open book theorem}, $(S^3,\xi_n)$ contact open book embeds in $\mathcal{A}ob(V^2_n,\chi_n) = \mathcal{A}ob(DT^*S^2 \mathsection...n \ \ copies...\mathsection DT^*S^2, \tau_1 \circ...n \ \ times...\circ \tau_1) = (S^5,\xi_{std})$, for $n \geq 1$. Thus, if $c_1(\xi) = 0$, then at least half of the overtwisted contact structures on $M^3$, admit contact open book embedding in $\mathcal{A}ob(W^4 \#_b V^2_n, \psi \circ \chi_n) = (N^5,\eta)$. If we know $W^4$ explicitely, then we get explicit open book embeddings of those overtwisted contact structures on $M^3$.   

For example, consider the real projective space $RP^3$ with the unique tight contact structure $\xi^2$ on it. Let $\xi_n^2$ denote the overtwisted contact structures on $RP^3$, obtained by taking connected sum with $(S^3,\xi_n)$, for $n \geq 1$. By \cite{CM}, $(RP^3,\xi^2) = \mathcal{A}ob(DT^*S^1, \tau_2)$ contact open book embeds in $\mathcal{A}ob(D^4, id) = (S^5,\xi_{std})$. Therefore, $(RP^3,\xi_n^2)$ contact open book embeds in $\mathcal{A}ob(V^2_n, \chi_n) = (S^5,\xi_{std})$.

\subsection{Contact open book embedding of exotic contact spheres}\label{exotic}

Koert and Niederkr\"uger 
\cite{KN} has proved that for $m \geq 2$, all the Ustilovsky spheres of dimension $4m+1$ admit open book decompositions with page $DT^*S^{2m}$ and monodromy a $k$-fold Dehn twist, for some odd $k$. Since, plumbing of pages of the supporting open book gives contact connected sum of the corresponding contact manifolds (see Proposition $2.6$ in \cite{CM}), every contact exotic sphere $\Sigma^{4m+1}$ which is a connected sum of the Ustilovsky spheres, has a contact open book embedding in $(S^{4m+3},\xi_{std})$. In fact, Theorem \ref{1st open book theorem} shows that an Ustilovsky sphere of dimension $4m+1$ contact open book embeds in all the contact homotopy $(4m+3)$-spheres given by $\mathcal{A}ob(DT^*S^{2m+1}, \tau_k)$ for $k \in \mathbb{Z}$.      

\subsection{A class of contact $(2n+1)$-manifolds that contact open book embed in $(S^{4n-1},\xi_{std})$}

 Every oriented closed manifold $V^n$ embeds in $S^{2n-1}$ with a non-zero normal vector field. Thus, we can follow the proof of Theorem \ref{1st open book theorem} to show that the contact manifold $(N^{2n+1},\eta) = \mathcal{A}ob(DT^*V^n, d\lambda_{can}, id)$ contact open book embeds in $(S^{4n-1},\xi_{std}) = \mathcal{A}ob(DT^*S^{2n-1}, \tau_1)$. Note that for $n \geq 4$, $\pi_1(N^{2n+1}) = \pi_1(V^n)$ and given any finitely presented group $\mathcal{G}$, we can find $V^n$ with $\pi_1(V^n) = \mathcal{G}$. Thus, given any finitely presented group $\mathcal{G}$, we can find a contact manifold $(N^{2n+1}, \xi)$ such that $\pi_1(N) = \mathcal{G}$ and $(N^{2n+1}, \xi)$ contact open book embeds in $(S^{4n-1},\xi_{std}) = \mathcal{A}ob(DT^*S^{2n-1}, \tau_1)$. So, in dimension greater than $9$, the fundamental
 group of a contact manifold $(M^{2n+1}, \xi)$ does not pose any obstruction to contact open book embedding in
 $(S^{4n-1}, \xi_{std})$. Kasuya has shown that every $2$-connected contact $(2n+1)$-manifold admits an iso-contact embedding in $(S^{4n+1},\xi_{std})$, for $n \geq 3$ (see Theorem $1.5$ in \cite{Ka}). Moreover, Torres \cite{To} has proved that every $(2n + 1)$-dimensional contact manifold admits a contact open book embedding in $(S^{4n+3},\xi_{std})$.

\subsection{Contact manifolds with first Chern class zero}

As discussed in \cite{Ka2}, a necessary condition for iso-contact embedding of $(M^{2n+1},\xi)$ in $(S^{2n+3},\xi_{std})$ is $c_1(\xi) = 0$. Thus, Theorem \ref{2nd open book theorem} provides a class of contact manifolds with vanishing first Chern class.        	

\subsection{contact open books with page $\Sigma_g$ and monodromy involving $\tau_{b_2}$} \label{nonexample} 

Consider the surface $(\Sigma_g,\partial \Sigma_g)$ of Figure \ref{humphreygen}. Let $\{m_1,m_2,...,m_{2k+1}\}$ be an odd chain of curves on $(\Sigma_g,\partial \Sigma_g)$, in minimal position. Consider a closed regular neighborhood $N(m_1,...,m_{2k+1})$ of their union. The boundary of $N(m_1,...,m_{2k+1})$ then consists of two simple closed curves, $d_1$ and $d_2$. It is known that $(\tau_{m_1}\circ \tau_{m_2}\circ...\circ \tau_{m_{2k+1}})^{2k+2} = \tau_{d_1} \circ \tau_{d_2}$ (see section $4.4$ of \cite{FM}). The map $\tau_{d_1} \circ \tau_{d_2}^{-1}$ is called the \emph{chain map} corresponding to $\{m_1,m_2,...,m_{2k+1}\}$. The $length$ of a chain is the number of simple closed curves in it. A chain is called $odd$ or $even$, depending on the parity of its length. In \cite{J}, Johnson proved the following remarkable fact.

\begin{theorem}[Johnson,\cite{J}]
	For $g \geq 3$, the odd subchain maps of the two chains $\{b_1, a_1, c_1,..., a_{g-1}, c_{g-1}, a_g\}$ and $\{\tau_{b_2}(a_2), c_2, a_3, c_3,...,a_{g-1}, c_{g-1}, a_g\}$ generate the Torelli subgroup of $\mathcal{MCG}(\Sigma_g,\partial \Sigma_g)$.
\end{theorem}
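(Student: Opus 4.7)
The plan is to verify first that the listed elements actually lie in the Torelli group, and then to reduce the statement to Johnson's earlier finite generation theorem for $\mathcal{I}_g := \mathcal{I}(\Sigma_g,\partial\Sigma_g)$. For the first point, I would use the fact recalled just above the statement: if $\{m_1,\dots,m_{2k+1}\}$ is an odd chain in minimal position with regular neighborhood bounded by the two simple closed curves $d_1,d_2$, then $(\tau_{m_1}\cdots\tau_{m_{2k+1}})^{2k+2}=\tau_{d_1}\tau_{d_2}$, so the chain map $\tau_{d_1}\tau_{d_2}^{-1}$ is a bounding pair (BP) map. Homologically $[d_1]=[d_2]$ in $H_1(\Sigma_g)$, so $\tau_{d_1}\tau_{d_2}^{-1}$ acts trivially on homology, hence lies in $\mathcal{I}_g$. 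This verification applies verbatim to every odd subchain of the two chains in the statement.

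Next I would invoke the Birman--Powell--Johnson principle that $\mathcal{I}_g$ (for $g\ge 3$) is generated by BP maps of genus $1$, i.e.\ BP maps $\tau_{d_1}\tau_{d_2}^{-1}$ where $d_1\cup d_2$ cobounds a genus-$1$ subsurface with one boundary component on each side. The reduction strategy is then to show that every genus-$1$ BP map on $\Sigma_g$ can be written as a product of the odd-subchain maps coming from the two chains
\[
C_1=\{b_1,a_1,c_1,\dots,a_{g-1},c_{g-1},a_g\},\qquad
C_2=\{\tau_{b_2}(a_2),c_2,a_3,c_3,\dots,a_{g-1},c_{g-1},a_g\}.
\]
For this I would run a change-of-coordinates argument: any genus-$1$ BP is conjugate in $\mathcal{MCG}(\Sigma_g,\partial\Sigma_g)$ to a ``standard model'' BP supported in a subsurface that can be embedded as an odd subchain of $C_1$ or $C_2$; one then has to realize the conjugating mapping class itself as a product of these subchain maps modulo elements already shown to be in the group being generated. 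The two chains are chosen precisely so that their union suffices to produce enough independent BP maps to cover every homology class carried by a genus-$1$ BP.

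The technical heart, and the step I expect to be the main obstacle, is proving that no BP maps are missed. The cleanest way is via Johnson's homomorphism
\[
\tau\colon \mathcal{I}_g \longrightarrow \Lambda^3 H/H,\qquad H:=H_1(\Sigma_g;\mathbb{Z}),
\]
together with the fact that $\tau$ is surjective and its kernel is generated by separating twists (which themselves are realized by even-length subchain commutators, e.g.\ the lantern-type relations among the $C_1$ twists). One computes $\tau$ on each odd subchain map of $C_1$ and $C_2$ using the formula $\tau(\tau_{d_1}\tau_{d_2}^{-1})=[d_1]\wedge(\text{genus-}1\text{ basis})$, and checks that the resulting vectors span $\Lambda^3 H/H$ as $k$ ranges over subchain lengths and the starting vertex ranges along $C_1,C_2$. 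The two chains were engineered by Johnson so that the intersection pattern of $\tau_{b_2}(a_2)$ with the rest of $C_2$ produces exactly the ``missing directions'' in $\Lambda^3 H/H$ left over by $C_1$ alone.

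Having surjectivity onto $\Lambda^3 H/H$ and containment of all separating twists (obtained as commutators and as chain-map relations within $C_1$ via the standard $2$-chain/$4$-chain relations), one concludes by the five-lemma applied to
\[
1 \longrightarrow \ker\tau \longrightarrow \mathcal{I}_g \xrightarrow{\ \tau\ } \Lambda^3 H/H \longrightarrow 0
\]
that the subgroup generated by the odd subchain maps of $C_1$ and $C_2$ equals all of $\mathcal{I}_g$. The delicate bookkeeping is the explicit realization of separating twists and of the conjugating elements by words in the two named chains; this is precisely the content of the combinatorial computations in Johnson's paper.
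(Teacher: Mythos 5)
This statement is quoted verbatim from Johnson's paper \cite{J}; the article you are reading offers no proof of it (it is used as an external input in section 4.5), so your sketch has to be measured against Johnson's actual argument rather than against anything in this paper. Your opening steps are sound and agree with the discussion preceding the statement: each odd subchain map is a bounding pair map $\tau_{d_1}\circ\tau_{d_2}^{-1}$ with $[d_1]=[d_2]$ in homology, hence lies in the Torelli group, and the correct starting point for the converse is Johnson's earlier theorem that for $g\ge 3$ the Torelli group is generated by genus-$1$ bounding pair maps, followed by a change-of-coordinates reduction to standard models.

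The step you yourself flag as the technical heart, however, does not work as proposed. You want to conclude from the sequence $1\to\ker\tau\to\mathcal{I}_g\to\operatorname{im}\tau\to 1$ (note that for a surface with boundary the Johnson homomorphism lands in $\Lambda^3H$, not $\Lambda^3H/H$) that it suffices to check surjectivity onto the image of $\tau$ plus containment of $\ker\tau$. This requires the theorem that $\ker\tau$ is generated by separating twists, which is Johnson's \emph{later} result (Torelli II), and its proof depends on the finite generation theorem you are trying to establish here --- so the route is circular. Even granting that input, the assertion that every separating twist is ``realized by even-length subchain commutators / lantern-type relations among the $C_1$ twists'' is precisely the kind of containment that constitutes the hard part of the theorem; it is claimed, not argued, and it is not obvious that the lantern relation alone produces separating twists in every topological type. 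Johnson's actual proof makes no use of the Johnson homomorphism: it is a direct induction, driven by the lantern relation, which rewrites an arbitrary genus-$1$ bounding pair map as a product of the listed subchain maps by successively simplifying how the bounding pair meets the two fixed chains. To repair your plan you would need to replace the five-lemma step by that combinatorial induction, or else supply an independent, non-circular proof that $\ker\tau$ is generated by separating twists together with an explicit realization of each such twist in your candidate subgroup.
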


Recall that the Torelli subgroup consists of elements in $\mathcal{MCG}(\Sigma_g,\partial \Sigma_g)$, which has trivial action on $H_1(\Sigma_g, \partial \Sigma_g; \mathbb{Z})$. Now, consider the odd subchain $\{\tau_{b_2}(a_2), c_2, a_3\}$ on $(\Sigma_g,\partial \Sigma_g)$. Let $e_1,e_2$ be the boundary curves of $N(\tau_{b_2}(a_2), c_2, a_3)$. The discussion above then implies the following. $$ \tau_{e_1} \circ \tau_{e_2}^{-1} = (\tau_{\tau_{b_2}(a_2)} \circ \tau_{c_2} \circ \tau_{a_3})^4 \circ \tau_{e_2}^{-2} = (\tau_{b_2}\circ \tau_{a_2} \circ \tau_{b_2}^{-1} \circ \tau_{c_2} \circ \tau_{a_3})^4 \circ \tau_{e_2}^{-2} $$. So, $\tau_{e_1} \circ \tau_{e_2}^{-1}$ has a factorization involving $\tau_{b_2}$. Since $\tau_{e_1} \circ \tau_{e_2}^{-1}$ is an element of the Torelli subgroup, $\mathcal{A}ob(\Sigma_g,\tau_{e_1} \circ \tau_{e_2}^{-1})$ is a homology sphere. Pancholi and Pandit (Theorem $3$ in \cite{PP}) have proved that if a closed $3$-manifold has no $2$-torsion in the first integer homology group, then a contact structure $\eta$ on that manifold admits iso-contact embedding in $(S^5,\xi_{std})$, if and only if $c_1(\eta) = 0$. Therefore, the contact open book $\mathcal{A}ob(\Sigma_g,\tau_{e_1} \circ \tau_{e_2}^{-1})$ admits an iso-contact embedding in $(S^5,\xi_{std})$. Many such examples can be formed by finding elements in the Torelli subgroup that involves $\tau_{b_2}$ in its factorization.

\end{document}